\newcommand{\design}{X}
\newcommand{\missdesignzero}{\tilde{X}}
\newcommand{\coeff}{\beta}
\newcommand{\pred}{y}
\newcommand{\noise}{\epsilon}
\newcommand{\dimob}{n}
\newcommand{\dimvar}{d}
\newcommand{\pas}{\alpha}
\newcommand{\gradtR}{\tilde{g}_{k}}
\newcommand{\gradRbis}{g_k}
\DeclareMathOperator*{\argmin}{arg\,min}
\crefname{lemma}{Lemma}{Lemmas}
\crefname{fact}{Fact}{Facts}
\crefname{theorem}{Theorem}{Theorems}
\crefname{corollary}{Corollary}{Corollaries}
\crefname{claim}{Claim}{Claims}
\crefname{example}{Example}{Examples}
\crefname{problem}{Problem}{Problems}
\crefname{definition}{Definition}{Definitions}
\crefname{assumption}{Assumption}{Assumptions}
\crefname{subsection}{Subsection}{Subsections}
\crefname{section}{Section}{Sections}
\crefname{algorithm}{Algorithm}{Algorithms}
\crefname{algocf}{alg.}{algs.}
\Crefname{algocf}{Algorithm}{Algorithms}
\crefname{proposition}{Proposition}{Propositions}
\newtheorem{theorem}{Theorem}
\newtheorem{lemma}[theorem]{Lemma}
\newtheoremstyle{remarks}
{\topsep}   
{\topsep}   
{}  
{0pt}       
{\bfseries} 
{}         
{5pt plus 1pt minus 1pt} 
{}          
\theoremstyle{remarks}
\newcounter{remarks}
\newtheorem{remark}[remarks]{Remark}
\title{Debiasing Averaged Stochastic Gradient Descent to handle missing values}
\author{%
   Aude Sportisse$^{(1)}$,
   Claire Boyer$^{(1,2)}$,
   Aymeric Dieuleveut$^{(3)}$,
   Julie Josse$^{(3,4)}$ \\
   $^{(1)}$ Sorbonne Université, $^{(2)}$ ENS Paris, $^{(3)}$ Ecole Polytechnique, $^{(4)}$ Google France
}
\begin{document}

\maketitle

\begin{abstract}
Stochastic gradient algorithm is a key ingredient of many machine learning methods, particularly appropriate for large-scale learning.
However, a major caveat of large data is their incompleteness.
We propose an averaged stochastic gradient algorithm handling missing values in linear models. This approach has the merit to be free from the need of any data distribution modeling and to account for heterogeneous missing proportion.
In both streaming and finite-sample settings, we prove that this algorithm achieves convergence rate of $\mathcal{O}(\frac{1}{n})$ at the iteration $n$, the same as without missing values. 
We show the convergence behavior and the relevance of the algorithm not only on synthetic data but also on real data sets, including those collected from medical register. 
 \end{abstract}

\section{Introduction} \label{sec:intro}
Stochastic gradient algorithms (SGD) \citep{robbins1951stochastic}
play a central role in machine learning problems, due to their cheap computational cost and memory per iteration. 
There is a vast literature on its variants, for example using averaging of the iterates \citep{polyak1992acceleration}, some robust versions of SGD \citep{nemirovski2009robust, juditsky2011solving} or adaptive gradient algorithms like Adagrad \citep{duchi2011adaptive}; and on theoretical  guarantees of those methods \citep{moulines2011non,bach2013non,dieuleveut2017harder,shamir2013stochastic,hazan2011beyond,needell2014stochastic}. 
More globally, averaging strategies have been used to stabilize the algorithm behaviour and reduce the impact of the noise, giving better convergence rates without requiring strong convexity. 

The problem of missing values is ubiquitous in large scale data analysis. 
One of the key challenges in the presence of missing data is to deal with the half-discrete nature of the data which can be seen as a mixed of continuous data (observed values) and categorical data (the missing values). In particular for gradient-based methods, the risk minimization with incomplete data becomes intractable and the usual results cannot be directly applied. 

\paragraph{Context.} 

In this paper, we consider a linear regression model, for $i\geq 1$, 
\begin{equation}\label{eq:linearmodel}
\pred_i=\design_{i:}^T\coeff^\star+\noise_i, 
\end{equation}
parametrized by $\coeff^\star \in \mathbb{R}^\dimvar$, where $\pred_i\in \mathbb{R}$,  $\noise_i\in \mathbb{R}$ is a real-valued centered noise and $\design_{i:}\in \mathbb{R}^\dimvar$ stands for the real covariates of the $i$-th observation. 
The $(\design_{i:})$'s are assumed to be only partially known, since some covariates may be missing: our objective is to derive stochastic algorithms for estimating the parameters of the linear model, which handle missing data, and come with strong theoretical guarantees on excess risk.

\paragraph{Related works.} 
There is a rich literature on  handling missing values \citep{little2019statistical} and yet there are still some challenges even for linear regression models. This is all the more true as we consider such models for large sample size or in high dimension. There are very few regularized versions of regression that can deal with missing values.  
A classical approach to estimating parameters with missing values consists in maximizing the observed likelihood, using  for instance an Expectation Maximization algorithm \citep{dempster1977maximum}. Even if this approach can be implemented to scale for large datasets see for instance \citep{cappe2009line},
one of its main drawbacks is to rely on strong parametric assumptions for the covariates distributions.
Another popular strategy to fix the missing values issue consists in predicting the missing values to get a completed data and then in applying the desired method. 
However matrix completion is a different problem from estimating parameters and can lead to uncontrolled bias and undervalued variance of the estimate  \citep{little2019statistical}. In the regression framework, \citet{jones1996indicator} studied the bias induced by naive imputation.
 
In the settings of the Dantzig selector \citep{rosenbaum2010sparse} and {LASSO} \citep{loh2011high}, 
another solution consists in naively imputing by 0 the incomplete matrix and modifying the algorithm used in the complete case to account for the imputation error. 
Such a strategy has also been studied by \citet{ma2017stochastic} for SGD  in the context of linear regression with missing values and with finite samples: the authors used debiased gradients, in the same spirit as the covariance matrix debiasing considered by \citet{loh2011high} in a context of sparse linear regression, or by \citet{koltchinskii2011nuclear} for matrix completion.
This modified version of the SGD algorithm \citep{ma2017stochastic} is {conjectured} to converge in expectation to the ordinary least squares estimator, achieving the rate of $\mathcal{O}(\frac{\log n}{\mu n})$ at iteration $n$ for the excess empirical risk, assumed to be $\mu$-strongly convex in that work. 
However, their algorithm requires a step choice relying on the knowledge of the strong-convexity constant $\mu$ which is often intractable for large-scale settings.

\vspace{-0.5em}
\paragraph{Contributions.} 
\begin{itemize}[topsep=0pt,itemsep=1pt,leftmargin=*]
	\item We develop a debiased averaged SGD to perform (regularized) linear regression either streaming or with finite samples, when covariates are missing.  The approach consists in imputing the covariates with a simple imputation and using debiased gradients accordingly.
	\item Furthermore, the design is allowed to be contaminated 
	by heterogeneous missing values:  each covariate may have a different probability to be missing. This encompasses the classical homogeneous Missing Completely At Random (MCAR) case, where the missingness is independent of any covariate value.
	\item This algorithm comes with theoretical guarantees: we establish convergence in terms of generalization risk  at the rate $1/n$ at iteration $n$.  This rate is remarkable as it is (i) optimal w.r.t.~$n$, (ii) free from any bad condition number (no strong convexity constant is required), and (iii) similar to the rate of averaged SGD without any missing value. 
	\item In terms of performance with respect to the missing entries proportion in large dimension, our strategy results in an error provably several orders of magnitude smaller than the best possible algorithm that would only rely on complete observations.
	\item   We show the relevance of the proposed approach and its convergence behavior on numerical  applications and its efficiency on real data; including the TraumaBase$^{\mbox{\normalsize{\textregistered}}}$ dataset to assist doctors in making real-time decisions in the management of severely traumatized patients. The code to reproduce all the simulations and numerical experiments is available on \url{https://github.com/AudeSportisse/SGD-NA}.
	
\end{itemize}

\section{Problem setting} \label{sec:setting}
In this paper, we consider either the streaming setting, i.e.\ when the data comes in as it goes along, or the finite-sample setting, i.e.\ when the data size is fixed and form a finite design matrix $\design = (X_{1:} | \hdots | X_{\dimob :})^T \in \mathbb{R}^{ \dimob \times \dimvar}$ ($\dimob>\dimvar$). We define $\mathcal D_n := \sigma\left( ( \design_{i:} , y_i ) , i = 1, \dots, n \right) $ the $ \sigma- $field generated by $n$ observations. 
We also denote   $\preccurlyeq$  the partial order between self-adjoint operators, such that $A\preccurlyeq B$ if $B-A$ is positive semi-definite.

Given observations as in \eqref{eq:linearmodel} and defining $f_i(\beta) := \left( \langle \design_{i:} , \coeff \rangle - y_i \right)^2/2$, the (unknown) linear model parameter satisfies:
\begin{equation}
\label{pb:online2}
\coeff^\star 
= \argmin_{\coeff \in \mathbb{R}^\dimvar} \left\{R(\beta) :=  \mathbb{E}_{(\design_{i:},y_i)}\left[f_i(\coeff)\right] \right\},
\end{equation}
 where $\mathbb{E}_{(X_{i:},y_i)}$ denotes the expectation over the distribution of $(X_{i:},y_i)$ (which is independent of $ i $ as the observations are assumed to be i.i.d.).

In this work, the covariates are assumed to contain missing values, so one in fact observes ${X}^{\textrm{NA}}_{i:} \in (\mathbb{R}\cup\{\mathtt{NA}\})^\dimvar$ instead of $X_{i:}$, as
$\design^{\textrm{NA}}_{i:} := \design_{i:}  \odot D_{i:} + \mathtt{NA}  (\mathbf{1}_{\dimvar} - D_{i:}),
$
where $\odot$ denotes the element-wise product, $\mathbf{1}_{\dimvar} \in \mathbb{R}^{\dimvar}$ is the vector filled with ones and $D_{i:} \in \{0,1\}^{\dimvar}$ is a binary vector mask coding for the presence of missing entries in $\design_{i:}$, i.e.\  $D_{ij} =0$ if the $(i,j)$-entry is missing in $\design_{i:}$, and $D_{ij} =1$ otherwise. We adopt the convention $\mathtt{NA} \times 0 = 0$ and $\mathtt{NA} \times 1 = \mathtt{NA}$. We consider a \emph{heterogeneous} MCAR  setting, i.e.
$D$ is modeled with a Bernoulli distribution
\begin{equation}
\label{eq:binarymaskprob}
D=\left(\delta_{ij}\right)_{1 \leq i \leq \dimob , 1\leq j \leq \dimvar} \quad \text{with} \quad \: \delta_{ij} \sim \mathcal{B}(p_j),
\end{equation}
with $1-p_j$ the probability that the $j$-th covariate is missing.

The considered approach consists in imputing the incomplete covariates by zero in $\design^{\textrm{NA}}_{i:}$, 
as  
$\missdesignzero_{i:}=\design^{\textrm{NA}}_{i:}\odot D_{i:}=X_{i:}\odot D_{i:},$
and in accounting for the imputation error in the subsequent algorithm.

\section{Averaged SGD with missing values} \label{sec:algo}

\begin{multicols}{2}
The proposed method is detailed in Algorithm \ref{alg:SGDav}.
The impact of the naive imputation by 0 directly translates into a bias in the gradient. Consequently, at each iteration we use a debiased estimate $ \tilde{g}_{k}$. In order to stabilize the stochastic algorithm, we consider the Polyak-Ruppert~\cite{polyak1992acceleration} averaged iterates  $\bar{\coeff}_{k}=\frac{1}{k+1}\sum_{i=0}^{k}\coeff_i.$

\begin{lemma} \label{lem:unbiased}Let $(\mathcal{F}_k)_{k\geq 0}$ be the following $\sigma$-algebra, 
	 	$
\mathcal{F}_k=  \sigma (X_{1:},y_1,D_{1:}\dots,X_{k:},y_k,D_{k:}).
	 $
\label{ass:iterationtheo} 
The modified gradient $\tilde{g}_{k}(\beta_{k-1})$ in \Cref{eq:gradMCARgeneral} is $\mathcal{F}_{k}$-measurable and a.s., 
	\begin{equation*}
		\mathbb{E}\left[\tilde{g}_{k}(\coeff_{k-1})\left.\right|\mathcal{F}_{k-1}\right]= \nabla R(\coeff_{k-1}).
	\end{equation*}

\end{lemma}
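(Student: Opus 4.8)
The goal is to construct the debiased gradient $\tilde{g}_k$ explicitly from the zero-imputed covariates $\tilde{X}_{k:} = X_{k:}\odot D_{k:}$, and then verify its conditional expectation equals $\nabla R(\beta_{k-1})$. First I would recall that in the complete-data case the stochastic gradient of $f_k$ at $\beta$ is $g_k(\beta) = (\langle X_{k:},\beta\rangle - y_k) X_{k:} = (X_{k:}X_{k:}^T)\beta - y_k X_{k:}$, whose conditional expectation (over the fresh sample $(X_{k:},y_k)$) is $\nabla R(\beta) = \mathbb{E}[X_{k:}X_{k:}^T]\beta - \mathbb{E}[y_k X_{k:}]$. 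The naive imputation replaces $X_{k:}$ by $\tilde{X}_{k:}$, which introduces bias because $\mathbb{E}[\tilde{X}_{k:}\tilde{X}_{k:}^T \mid X_{k:}]$ is not $X_{k:}X_{k:}^T$: the diagonal entries get multiplied by $p_j$ (since $\delta_{kj}^2=\delta_{kj}$) while off-diagonal entries get multiplied by $p_jp_{j'}$. Similarly $\mathbb{E}[y_k\tilde{X}_{k:}\mid X_{k:},y_k]$ has $j$-th coordinate scaled by $p_j$.

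The plan is therefore to exhibit the correction: define the rescaling vector with entries $1/p_j$, apply it coordinatewise to $\tilde{X}_{k:}$ to fix the linear term, and apply the matrix of factors $1/(p_jp_{j'})$ off the diagonal and $1/p_j$ on the diagonal to the rank-one matrix $\tilde{X}_{k:}\tilde{X}_{k:}^T$ to fix the quadratic term — this is exactly the content of \eqref{eq:gradMCARgeneral}. I would then take the conditional expectation of $\tilde{g}_k(\beta_{k-1})$ given $\mathcal{F}_{k-1}$. Since $\beta_{k-1}$ is $\mathcal{F}_{k-1}$-measurable and the pair $(X_{k:},y_k,D_{k:})$ is independent of $\mathcal{F}_{k-1}$, the conditional expectation factorizes: first condition on $(X_{k:},y_k)$ and integrate over the mask $D_{k:}$ using independence of its coordinates and $\mathbb{E}[\delta_{kj}]=p_j$, $\mathbb{E}[\delta_{kj}\delta_{kj'}] = p_jp_{j'}$ for $j\neq j'$, $\mathbb{E}[\delta_{kj}^2]=p_j$; the correction factors cancel these exactly, recovering $(X_{k:}X_{k:}^T)\beta_{k-1} - y_k X_{k:}$. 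Then integrate over $(X_{k:},y_k)$ to obtain $\nabla R(\beta_{k-1})$. The $\mathcal{F}_k$-measurability is immediate since $\tilde{g}_k(\beta_{k-1})$ is a fixed (deterministic, given the $p_j$) function of $X_{k:},y_k,D_{k:}$ and the $\mathcal{F}_{k-1}$-measurable quantity $\beta_{k-1}$.

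The main obstacle — really the only delicate bookkeeping — is handling the diagonal-versus-off-diagonal distinction in the quadratic term correctly, i.e.\ making sure the correction matrix uses $1/p_j$ on the diagonal and $1/(p_jp_{j'})$ elsewhere so that the single-mask-variable term $\delta_{kj}^2 = \delta_{kj}$ and the product term $\delta_{kj}\delta_{kj'}$ are both debiased. One clean way to package this is to write $\tilde{g}_k(\beta) = \tilde{X}_{k:}\tilde{X}_{k:}^T \oslash P \,\beta - \operatorname{diag}(\tilde{X}_{k:})^2 (\text{correction}) - y_k (\tilde{X}_{k:}\oslash p)$ where $P_{jj'} = p_jp_{j'}$ and $\oslash$ is entrywise division, with an additive diagonal correction term accounting for the $1/p_j$ versus $1/p_j^2$ discrepancy; then the verification reduces to the three elementary moment identities above. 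Everything else is linearity of expectation and the independence of the incoming sample from the past.
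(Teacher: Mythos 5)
Your proposal is correct and follows essentially the same route as the paper: both arguments exploit that $\beta_{k-1}$ is $\mathcal{F}_{k-1}$-measurable while $(X_{k:},y_k,D_{k:})$ is independent of $\mathcal{F}_{k-1}$, then integrate first over the mask using the moment identities $\mathbb{E}[\delta_{kj}]=p_j$, $\mathbb{E}[\delta_{kj}^2]=p_j$, $\mathbb{E}[\delta_{kj}\delta_{kj'}]=p_jp_{j'}$ (equivalently, the paper's identities $\mathbb{E}_{D_{k:}}[\tilde{X}_{k:}\tilde{X}_{k:}^T]=PX_{k:}X_{k:}^TP-(P-P^2)\mathrm{diag}(X_{k:}X_{k:}^T)$, $\mathbb{E}_{D_{k:}}[\mathrm{diag}(\tilde{X}_{k:}\tilde{X}_{k:}^T)]=P\mathrm{diag}(X_{k:}X_{k:}^T)$, $\mathbb{E}_{D_{k:}}[\tilde{X}_{k:}]=PX_{k:}$), and finally over $(X_{k:},y_k)$ to recover $\nabla R(\beta_{k-1})$. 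Your diagonal-versus-off-diagonal bookkeeping matches the paper's matrix form $P^{-1}\tilde{X}_{k:}\tilde{X}_{k:}^TP^{-1}-(I-P)P^{-2}\mathrm{diag}(\tilde{X}_{k:}\tilde{X}_{k:}^T)$ exactly, so no further comparison is needed.
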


\begin{algorithm}[H]
	\caption{Averaged SGD for Heterogeneous Missing Data}
	\label{alg:SGDav}
	\begin{algorithmic}
		\STATE {\bfseries Input:} data $\tilde{X}, y, \alpha$ (step size)
		\STATE Initialize $\beta_0=0_d$.
		\STATE Set $P=\mathrm{diag}\left ((p_j)_{j \in \{1,\dots,d\}}\right) \in \mathbb{R}^{d\times d}$.
		\FOR{$k=1$ {\bfseries to} $n$}		    
		    \STATE 
		    \vspace{-0.6cm}
		    \begin{multline}	    
		    \label{eq:gradMCARgeneral}		  \tilde{g}_{k}(\coeff_k)=P^{-1}\missdesignzero_{{k}:}\left(\missdesignzero_{{k}:}^TP^{-1}\coeff_k-y_{k}\right)\\ 
		    -(\mathrm{I}-P)P^{-2}\mathrm{diag}\left(\missdesignzero_{{k}:}\missdesignzero_{{k}:}^T\right)\coeff_{k}
		    \end{multline}
		    \STATE $\coeff_{k}=\coeff_{k-1}-\pas\tilde{\gradRbis}(\coeff_{k-1})$
		    \STATE $\bar{\coeff}_{k}=\frac{1}{k+1}\sum_{i=0}^{k}\coeff_i = \frac{k}{k+1} \bar{\coeff}_{k-1} + \frac{1}{k+1} \coeff_k $
		\ENDFOR
	\end{algorithmic}
\end{algorithm}
\end{multicols}
\vspace{-0.4cm}
\Cref{ass:iterationtheo} is proved in \Cref{supsec:lem:unbiased}.
Note that in the case of homogeneous MCAR data, i.e.
$p_1 = \hdots = p_d = p\in (0,1)$, the chosen direction at iteration $k$ in \Cref{eq:gradMCARgeneral} boils down to 
$
\frac{1}{p}\missdesignzero_{{k}:}\left( \frac{1}{p}\missdesignzero_{{k}:}^T\coeff_k-y_{k}\right) -\frac{1-p}{p^2}\mathrm{diag}\left(\missdesignzero_{{k}:}\missdesignzero_{{k}:}^T\right)\coeff_{k}.
$
This meets the classical debiasing terms of covariance matrices{ \cite{loh2011high, ma2017stochastic, koltchinskii2011nuclear} }.  
Note also that in the presence of complete observations, meaning that $p=1$, Algorithm \ref{alg:SGDav} matches the standard least squares stochastic algorithm.

\begin{remark}[Ridge regularization]
	\label{rem:ridge}
	Instead of minimizing the theoretical risk as in \eqref{pb:online2}, we can consider a Ridge regularized formulation:
	$\min_{\coeff \in \mathbb{R}^\dimvar} \: R(\coeff) + \lambda\|\coeff\|^2,$
	with $\lambda>0$. 
    \Cref{alg:SGDav} is trivially extended to this  framework: the debiasing term is not modified since the penalization term does not involve the incomplete data $\tilde{X}_{i:}$. This is useful in practice as no implementation is availaible for incomplete ridge regression.
\end{remark}

\section{Theoretical results} \label{sec:results}
In this section, we prove convergence guarantees for  \Cref{alg:SGDav} in terms of theoretical excess risk, in both the streaming and the finite-sample settings. 
For the rest of this section, assume the following.
\begin{itemize}[topsep=0pt,itemsep=1pt,leftmargin=*]
	\item The observations $(X_{k:},y_k) \in \mathbb{R}^{d}\times \mathbb{R}$ are independent and identically distributed. 
	\item  $\mathbb{E}[\|X_{k:}\|^2]$ and $\mathbb{E}[\|y_k\|^2]$ are finite. 
	\item Let $H$ be an invertible matrix, defined by
$H:=\mathbb{E}_{(X_{k:},y_k)}[X_{k:}X_{k:}^T]. $
\end{itemize}

The main technical challenge to overcome is proving that the noise in play due to missing values is \emph{strutured} and still allows to derive convergence results for a debiased version of averaged SGD. This work builds upon the analysis made by \citet{bach2013non} for standard SGD strategies.

\subsection{Technical results}

\citet{bach2013non} proved that for least-squares regression,  {averaged SGD} converges at rate $n^{-1}$ after $ n $ iterations. In order to derive similar results,  we prove in addition to  \Cref{lem:unbiased}, \Cref{lem:noisetheo,lem:covtheo}: 
\begin{itemize}[topsep=0pt,itemsep=1pt,leftmargin=*,noitemsep]
	\item \Cref{lem:noisetheo} shows that the noise induced by the imputation by zeros and the subsequent transformation results in a \emph{structured noise}.  This is the most challenging part technically: having a structured noise is fundamental to obtain convergence rates scaling as $ n^{-1} $ -- in the unstructured case the convergence speed is only $ n^{-1/2} $ \citep{dieuleveut2017harder}. 
	\item \Cref{lem:covtheo} shows that the adjusted random gradients $ \gradtR(\coeff) $ are almost surely  \emph{co-coercive} ~\citep{Zhu_Mar_1995} i.e., for any $k$, there exists a random ``primitive'' function
	$ \tilde{f}_{k}$ which is a.s.~convex and smooth, and such that $ \tilde{g}_k  =\nabla \tilde{f}_{k} $ .  Proving that $ \tilde{f}_{k}$ is a.s.~convex is an important step which was missing in the analysis of \citet{ma2017stochastic}.
\end{itemize}

\begin{lemma}
\label{lem:noisetheo} 
 The additive  noise process $(\tilde{g}_{k}(\coeff^\star))_k$ with $\beta^\star$ defined in \eqref{pb:online2} is $\mathcal{F}_{k}-$measurable and,
	\begin{enumerate}
		\item \label{ass:noisezeroexp} $\forall k\geq 0, \: \mathbb{E}[\tilde{g}_{k}(\coeff^\star) \left.\right| \mathcal{F}_{k-1}]=0$ a.s..
		\item \label{ass:noisebound} $ \forall k\geq 0, \: \mathbb{E}[\|\tilde{g}_{k}(\coeff^\star)\|^2 \left.\right| \mathcal{F}_{k-1}]$ is a.s. finite.
		\item \label{ass:noisecov} $\forall k\geq 0, \: \mathbb{E}[\tilde{g}_{k}(\coeff^\star){\tilde{g}_{k}(\coeff^\star)^T}]\preccurlyeq C(\beta^\star)=c(\beta^\star)H$.
	\end{enumerate}
\end{lemma}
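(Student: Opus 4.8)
The plan is to work directly from the explicit expression \eqref{eq:gradMCARgeneral} evaluated at $\coeff^\star$, using the linear model $y_k = X_{k:}^T\coeff^\star + \noise_k$ and the independence structure $D_{k:}\independent (X_{k:},y_k)$ together with $\delta_{kj}\sim\mathcal B(p_j)$. First I would establish the $\mathcal F_k$-measurability, which is immediate since $\tilde g_k(\coeff^\star)$ is a fixed measurable function of $(X_{k:},y_k,D_{k:})$. For item \ref{ass:noisezeroexp}, I note that $\tilde g_k(\coeff^\star)$ depends only on the $k$-th observation, so conditioning on $\mathcal F_{k-1}$ is the same as taking the unconditional expectation; then $\mathbb E[\tilde g_k(\coeff^\star)] = \mathbb E[\tilde g_k(\coeff^\star)\mid \mathcal F_{k-1}]$ a.s. But \Cref{lem:unbiased} gives $\mathbb E[\tilde g_k(\beta)\mid\mathcal F_{k-1}] = \nabla R(\beta)$ for the (deterministic) argument $\beta$, and since $\coeff^\star$ minimizes the convex risk $R$ we have $\nabla R(\coeff^\star)=0$; hence item \ref{ass:noisezeroexp} follows directly from \Cref{lem:unbiased}, being careful that $\coeff^\star$ is non-random so the lemma applies verbatim.

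For item \ref{ass:noisebound}, I would expand $\tilde g_k(\coeff^\star)$ into its two summands from \eqref{eq:gradMCARgeneral}, substitute $\missdesignzero_{k:} = X_{k:}\odot D_{k:}$ and $y_k = X_{k:}^T\coeff^\star+\noise_k$, and bound $\|\tilde g_k(\coeff^\star)\|^2$ by a polynomial in the entries of $X_{k:}$, in $\noise_k$, and in the (bounded, since $p_j\in(0,1)$) entries of $P^{-1}$. Taking conditional expectation, each term is controlled by moments of the form $\mathbb E[\,\|X_{k:}\|^a |\noise_k|^b \mid \mathcal F_{k-1}]$; the hypotheses $\mathbb E\|X_{k:}\|^2<\infty$, $\mathbb E\|y_k\|^2<\infty$ give finiteness of the relevant second moments — though I should check whether fourth moments of $X_{k:}$ are implicitly needed here, since products like $X_{kj}^2 X_{kl}^2$ appear; if so this would require strengthening the moment assumption or interpreting "finite" suitably. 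The conditional expectation collapses to the unconditional one by the same one-observation argument.

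The heart of the lemma is item \ref{ass:noisecov}, the structured-noise bound $\mathbb E[\tilde g_k(\coeff^\star)\tilde g_k(\coeff^\star)^T]\preccurlyeq c(\coeff^\star)H$. The strategy is: (i) write $\tilde g_k(\coeff^\star)$ explicitly, separating the contribution of the noise $\noise_k$ from the contribution of the masking randomness; (ii) compute $\mathbb E[\tilde g_k(\coeff^\star)\tilde g_k(\coeff^\star)^T]$ by first taking expectation over $D_{k:}$ (using independence of the $\delta_{kj}$ and the identities $\mathbb E[\delta_{kj}]=p_j$, $\mathbb E[\delta_{kj}^2]=p_j$, which is exactly why the debiasing term in \eqref{eq:gradMCARgeneral} is designed as it is), then over $(X_{k:},\noise_k)$; (iii) show every resulting PSD matrix is dominated by a constant multiple of $H=\mathbb E[X_{k:}X_{k:}^T]$. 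The key structural fact to exploit is that, conditionally on $X_{k:}$, the vector $\tilde g_k(\coeff^\star)$ is (up to lower-order debiasing corrections) proportional to $P^{-1}(X_{k:}\odot D_{k:})$ times a scalar residual, so its second moment is governed by $\mathbb E[(X_{k:}\odot D_{k:})(X_{k:}\odot D_{k:})^T]$ and similar terms, all of which are entrywise rescalings of moments of $X_{k:}X_{k:}^T$.

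The main obstacle I anticipate is step (iii): controlling the cross terms and the debiasing-correction terms so that the whole covariance is genuinely bounded by $c(\coeff^\star)H$ in the Loewner order, rather than merely by $c(\coeff^\star)\,\mathrm{tr}(H)\,\mathrm{I}$ or some other weaker bound — the $n^{-1}$ rate depends crucially on having exactly the form $cH$. This likely requires either a boundedness assumption on $\|X_{k:}\|$ (so that $\mathbb E[\|X_{k:}\|^2 X_{k:}X_{k:}^T]\preccurlyeq R^2 H$ for some $R$) or a fourth-moment/kurtosis-type assumption relating $\mathbb E[X_{k:}X_{k:}^T\otimes X_{k:}X_{k:}^T]$ to $H\otimes H$, paralleling the assumptions used by \citet{bach2013non}; I would state and use whichever form the paper's standing assumptions provide, and then the computation reduces to collecting the constants into $c(\coeff^\star)$, which will depend on $\|\coeff^\star\|$, on $\min_j p_j$, on the noise variance, and on the moment bound for $X_{k:}$.
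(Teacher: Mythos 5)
Your overall plan coincides with the paper's proof: item \ref{ass:noisezeroexp} is obtained exactly as you say, from \Cref{lem:unbiased} applied at the deterministic point $\beta^\star$ together with $\nabla R(\beta^\star)=0$; item \ref{ass:noisebound} is obtained by splitting the two summands of \eqref{eq:gradMCARgeneral}, taking $\mathbb{E}_{D_{k:}}$ first and then $\mathbb{E}_{(X_{k:},y_k)}$; and your remark that the resulting bound involves fourth-order quantities such as $\mathbb{E}\bigl[\|X_{k:}\|^2\,\beta^{\star T}\mathrm{diag}(X_{k:}X_{k:}^T)\beta^\star\bigr]$, not covered by the standing second-moment assumptions alone, is accurate (the a.s.\ bound $\|X_{i:}\|\leq\gamma$ assumed in \Cref{theo:Bachresultgen} is what closes this).

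The genuine gap is in your step (iii) for item \ref{ass:noisecov}, which is precisely where essentially all of the paper's work lies. Two points. First, your claim that the relevant matrices are ``entrywise rescalings of moments of $X_{k:}X_{k:}^T$'' and hence controlled by $H$ is too quick: entrywise rescaling does not preserve the Loewner order, and each domination has to be proved by hand. The paper needs a battery of such facts — $P^{-1}\tilde{X}_{k:}\tilde{X}_{k:}^TP^{-1}\preccurlyeq p_m^{-2}\tilde{X}_{k:}\tilde{X}_{k:}^T$, then $\tilde{X}_{k:}\tilde{X}_{k:}^T\preccurlyeq X_{k:}X_{k:}^T$, and crucially $\mathrm{diag}(X_{k:}X_{k:}^T)\beta^\star\beta^{\star T}\mathrm{diag}(X_{k:}X_{k:}^T)\preccurlyeq\|\mathrm{diag}(X_{k:})\beta^\star\|^2\,X_{k:}X_{k:}^T$ and its analogue with $\mathrm{diag}(\beta^\star\beta^{\star T})$ — each established by exhibiting the quadratic form $v\mapsto v^T(\cdot)v$ as a perfect square via a coefficientwise Cauchy--Schwarz inequality. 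Second, the cross term between the residual part $P^{-1}\tilde{X}_{k:}(\tilde{X}_{k:}^TP^{-1}\beta^\star-y_k)$ and the debiasing correction is not ``lower order'': after taking $\mathbb{E}_{D_{k:}}$ it splits into five pieces, of which two must be shown negative semidefinite (again by perfect-square arguments), two vanish only because $\mathbb{E}[\epsilon_k\mid X_{k:}]=0$, and one survives and contributes the $6(1-p_m)p_m^{-2}\|X_{k:}\|^2\|\beta^\star\|^2H$ portion of $c(\beta^\star)$. Without carrying out these dominations you only reach $\mathbb{E}[\tilde{g}_k(\beta^\star)\tilde{g}_k(\beta^\star)^T]\preccurlyeq\mathbb{E}[\|\tilde{g}_k(\beta^\star)\|^2]\,\mathrm{I}$, which the paper explicitly notes is insufficient: the $n^{-1}$ rate requires the bound in the exact form $c(\beta^\star)H$. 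So the architecture of your proof is right, but the step you defer to ``collecting the constants'' is the lemma's actual content and cannot be waved through.
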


\begin{proof}[Sketch of proof (\Cref{lem:noisetheo})]
	 Property \ref{ass:noisezeroexp} easily followed from \Cref{lem:unbiased} and the definition of $\beta^\star$.
	 Property \ref{ass:noisebound} can be obtained with similar computations as in \citep[Lemma 4]{ma2017stochastic}. Property \ref{ass:noisecov} cannot  be directly  derived from Property~\ref{ass:noisebound}, since $\tilde{g}_{k}(\coeff^\star){\tilde{g}_{k}(\coeff^\star)^T}\preccurlyeq \|\tilde{g}_{k}(\coeff^\star)\|^2I$ leads to an insufficient upper bound. Proof relies on decomposing the external product $\tilde{g}_{k}(\coeff^\star){\tilde{g}_{k}(\coeff^\star)^T}$ in several terms and obtaining the control of each, involving technical computations. 
\end{proof}

\begin{lemma}
	 \label{lem:covtheo}  For all $k \geq 0$, given the binary mask $D$, the adjusted gradient $\tilde{g}_k(\coeff)$ is a.s. $L_{k,D}$-Lipschitz continuous, i.e.\ for all $u, v \in \mathbb {R}^d$,
	$\|\tilde{g}_{k}(u)-\tilde{g}_{k}(v)\|\leq L_{k,D}\|u-v\|~\textrm{a.s.}.
	$
	Set
	\begin{equation}
	\label{eq:smooth_const}
	L:=\sup_{k,D} L_{k,D} \leq \frac{1}{p_m^2} \max_k \|\design_{k:}\|^2~\textrm{a.s.}.
	\end{equation}
	In addition, for all $k \geq 0$, $\tilde{g}_{k}(\beta)$ is almost surely co-coercive. 
\end{lemma}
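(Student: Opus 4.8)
The plan is to prove the two assertions of \Cref{lem:covtheo} in order: first the Lipschitz bound \eqref{eq:smooth_const}, then co-coercivity, which I expect to be the genuinely delicate part. For the Lipschitz claim, I would work conditionally on the mask $D$ (equivalently on $D_{k:}$), so that $\tilde g_k$ becomes a deterministic affine map of $\coeff$. Indeed, from \eqref{eq:gradMCARgeneral}, for fixed $k$, $\tilde g_k(u)-\tilde g_k(v) = A_{k,D}(u-v)$ where
\[
A_{k,D} := P^{-1}\missdesignzero_{k:}\missdesignzero_{k:}^T P^{-1} - (\mathrm I - P)P^{-2}\mathrm{diag}\!\left(\missdesignzero_{k:}\missdesignzero_{k:}^T\right).
\]
Since $\missdesignzero_{k:} = X_{k:}\odot D_{k:}$, each $P^{-1}$ applied to $\missdesignzero_{k:}$ only rescales the observed coordinates by $1/p_j \le 1/p_m$ where $p_m := \min_j p_j$; the first term is then (at most) rank one with operator norm $\le \|P^{-1}\missdesignzero_{k:}\|^2 \le p_m^{-2}\|\missdesignzero_{k:}\|^2 \le p_m^{-2}\|X_{k:}\|^2$. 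The diagonal correction term is a diagonal matrix whose entries are $\tfrac{1-p_j}{p_j^2}\missdesignzero_{kj}^2 \ge 0$, bounded in absolute value by $p_m^{-2}\|\missdesignzero_{k:}\|^2$ as well, but more carefully one notes both terms together combine favorably. Bounding $\|A_{k,D}\|_{\mathrm{op}}$ by $p_m^{-2}\|X_{k:}\|^2$ gives $L_{k,D}\le p_m^{-2}\|X_{k:}\|^2$, and taking the supremum over $k$ (finite-sample case) or the essential supremum over the mask realizations yields \eqref{eq:smooth_const}.

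For co-coercivity, the strategy is to exhibit, for each fixed $k$ and fixed mask $D_{k:}$, a convex quadratic ``primitive'' $\tilde f_k$ with $\nabla \tilde f_k = \tilde g_k$. Since $\tilde g_k$ is affine in $\coeff$ with linear part $A_{k,D}$, a primitive exists iff $A_{k,D}$ is symmetric (which it is, being a sum of a symmetric rank-one-type matrix and a diagonal matrix), and then $\tilde f_k(\coeff) = \tfrac12 \coeff^T A_{k,D}\coeff + b_{k,D}^T\coeff$ up to a constant, with $b_{k,D} = -P^{-1}\missdesignzero_{k:} y_k$. The crux is then showing $A_{k,D}\succcurlyeq 0$ a.s. This is exactly the step flagged in the excerpt as ``missing in the analysis of \citet{ma2017stochastic}.'' Here is where I would spend the effort: writing $z := P^{-1}\missdesignzero_{k:}$ (so $z_j = \missdesignzero_{kj}/p_j$, and $z_j = 0$ whenever coordinate $j$ is missing), one has
\[
A_{k,D} = zz^T - (\mathrm I - P)P^{-2}\mathrm{diag}\!\left(\missdesignzero_{k:}\missdesignzero_{k:}^T\right),
\]
and I would compute the diagonal correction in terms of $z$: its $j$-th diagonal entry is $\tfrac{1-p_j}{p_j^2}\missdesignzero_{kj}^2 = (1-p_j)\,z_j^2$, so $A_{k,D} = zz^T - \mathrm{diag}\big((1-p_j)z_j^2\big)$. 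For $w\in\mathbb R^d$, $w^T A_{k,D} w = (z^Tw)^2 - \sum_j (1-p_j)z_j^2 w_j^2$. This is \emph{not} automatically nonnegative pointwise, so the correct move is to take a further conditional expectation: one must establish co-coercivity in the averaged sense needed by the Bach--Moulines machinery, i.e. that $\mathbb E[\tilde g_k(u) - \tilde g_k(v)\,|\,\mathcal F_{k-1}]$ relates to $\mathbb E[\tilde f_k(u) - \tilde f_k(v) - \langle \tilde g_k(v), u-v\rangle\,|\,\mathcal F_{k-1}]$, using that $\mathbb E[z_j^2] $ and $\mathbb E[z_j z_\ell]$ recover $\mathbb E[X_{kj}X_{k\ell}]$ after the debiasing cancels the $(1-p_j)$ terms on the diagonal. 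So I would argue: (i) conditionally on $D_{k:}$, $\tilde g_k = \nabla \tilde f_k$ with $\tilde f_k$ quadratic; (ii) $\mathbb E[A_{k,D}\,|\,\mathcal F_{k-1}\text{ and }X_{k:}] = X_{k:}X_{k:}^T \succcurlyeq 0$ because $\mathbb E[z_j z_\ell] = X_{kj}X_{k\ell}$ for $j\ne\ell$ and $\mathbb E[z_j^2] - (1-p_j)X_{kj}^2 = X_{kj}^2/p_j - (1-p_j)X_{kj}^2$... here I would double-check the exact normalization, since the debiasing in \eqref{eq:gradMCARgeneral} is designed precisely so the diagonal works out; (iii) invoke \Cref{lem:unbiased} to tie $\mathbb E[\tilde g_k]$ to $\nabla R$.

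The main obstacle is therefore reconciling ``a.s. co-coercivity'' (as literally stated in the lemma) with the fact that $A_{k,D}$ need not be PSD for every mask realization: the honest reading is that co-coercivity must be understood either (a) after conditioning on $D_{k:}$ and additionally assuming the specific structure makes $A_{k,D}\succcurlyeq0$ whenever $\missdesignzero_{k:}$ has at most one nonzero coordinate — false in general — or, more plausibly, (b) in expectation over $D_{k:}$, which is what actually enters the convergence proof. I would resolve this by carefully stating co-coercivity \emph{in conditional expectation}: defining $\tilde f_k$ as above, showing $\mathbb E[\tilde f_k(\cdot)\,|\,\mathcal F_{k-1}, X_{k:},y_k]$ is convex because its Hessian is $\mathbb E[A_{k,D}\,|\,\dots] = X_{k:}X_{k:}^T\succcurlyeq0$, and then the standard co-coercivity inequality for convex $L$-smooth functions — $\langle \nabla g(u)-\nabla g(v), u-v\rangle \ge \tfrac1L\|\nabla g(u)-\nabla g(v)\|^2$ — applies to this conditional primitive with the $L$ from \eqref{eq:smooth_const}. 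The remaining steps (symmetry of $A_{k,D}$, existence of the quadratic primitive, the operator-norm bounds) are routine linear algebra; the only place requiring care is the bookkeeping that the diagonal debiasing term in \eqref{eq:gradMCARgeneral} exactly cancels the bias in $\mathbb E[\mathrm{diag}(\missdesignzero\missdesignzero^T)]$, which is the same computation underlying \Cref{lem:unbiased} and can be cited from there.
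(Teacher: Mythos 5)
Your handling of the Lipschitz claim is essentially the paper's own argument: both of you write $\tilde{g}_k(u)-\tilde{g}_k(v)=A_{k,D}(u-v)$ with $A_{k,D}=P^{-1}\missdesignzero_{k:}\missdesignzero_{k:}^TP^{-1}-(\mathrm{I}-P)P^{-2}\mathrm{diag}(\missdesignzero_{k:}\missdesignzero_{k:}^T)$ and bound the operator norm by $p_m^{-2}\|\design_{k:}\|^2$ (the paper cites Weyl's inequality where you argue term by term; same content). The real divergence is on co-coercivity, and there the two proofs are not reconcilable. The paper does \emph{not} pass to conditional expectations: it claims the symmetric Jacobian $A_{k,D}$ is a.s.\ positive semi-definite \emph{pointwise in the mask}, by asserting (in the homogeneous case) that $v^T\bigl(\missdesignzero_{k:}\missdesignzero_{k:}^T-(1-p)\mathrm{diag}(\missdesignzero_{k:}\missdesignzero_{k:}^T)\bigr)v=\sum_j p\missdesignzero_{kj}^2v_j^2+2\sum_{j<l}\missdesignzero_{kj}\missdesignzero_{kl}v_jv_l$ reduces to the perfect square $\bigl(\sum_j\sqrt{p}\,\missdesignzero_{kj}v_j\bigr)^2$. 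Your suspicion about this pointwise claim is well founded: that reduction replaces the cross coefficient $\missdesignzero_{kj}\missdesignzero_{kl}$ by $p\,\missdesignzero_{kj}\missdesignzero_{kl}$, which is only a lower bound when $\missdesignzero_{kj}\missdesignzero_{kl}v_jv_l\ge 0$. Taking $d=2$, $p_1=p_2=1/2$, both entries observed with $\design_{k:}=(1,1)$ and $v=(1,-1)$ gives $(\missdesignzero_{k:}^Tv)^2-(1-p)\sum_j\missdesignzero_{kj}^2v_j^2=-1<0$, so $A_{k,D}$ has a negative eigenvalue and the affine map $\tilde g_k$ is not co-coercive for that realization. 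Your identity $w^TA_{k,D}w=(z^Tw)^2-\sum_j(1-p_j)z_j^2w_j^2$ with $z=P^{-1}\missdesignzero_{k:}$ is exactly the right way to see this.

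However, your proposed repair does not prove the lemma as stated, and you should not present it as if it did. ``Almost surely co-coercive'' is precisely the pointwise property, and it is the hypothesis the paper needs in order to run the argument of Theorem~1 of Bach and Moulines, where co-coercivity of the random gradients must hold almost surely, not merely after averaging over $D_{k:}$. Your computation $\mathbb{E}_{D_{k:}}[A_{k,D}]=\design_{k:}\design_{k:}^T\succcurlyeq 0$ is correct (it is the same cancellation as in \Cref{lem:unbiased}), but substituting ``co-coercive in conditional expectation'' for ``a.s.\ co-coercive'' changes the hypothesis fed into the convergence machinery, and you give no argument that \Cref{theo:Bachresultgen} survives under the weaker property. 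So, relative to the paper, you have correctly located the fragile step (the paper's sum-of-squares manipulation is invalid for sign-indefinite $v$), but you have not closed the gap: you prove a weaker statement and leave open whether it suffices downstream. A complete treatment would either restrict to a regime where $zz^T-\mathrm{diag}((1-p_j)z_j^2)\succcurlyeq 0$ does hold, or rework the convergence proof so that only expected co-coercivity is required.
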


\Cref{lem:noisetheo,lem:covtheo} are respectively proved in  \Cref{appsec:lem:noisetheo,appsec:lem:covtheo-app}, and can be combined with Theorem 1 in \cite{bach2013non} in order to prove the following theoretical guarantees for  \Cref{alg:SGDav}.

\subsection{Convergence results}

The following theorem quantifies the convergence rate of \Cref{alg:SGDav} in terms of excess risk.

\begin{theorem}[Streaming setting]\label{theo:Bachresultgen}
	Assume that for any $i$, $\|X_{i:}\|\leq \gamma$ almost surely for some $\gamma>0$. For any constant step-size $\alpha \leq \frac{1}{2L}$,  \Cref{alg:SGDav} ensures that, for any $ k\geq 0 $:
	$$\mathbb{E}\left [R\left (\bar{\coeff}_k\right )-R(\coeff^\star)\right ]\leq\frac{1}{2k}\left(\frac{\sqrt{c(\beta^\star)\dimvar}}{1-\sqrt{\pas L}}+\frac{\|\coeff_0-\coeff^\star\|}{\sqrt{\pas}}\right)^2,$$
	with $L$ given in \Cref{eq:smooth_const}, $p_m=\min_{j=1, \dots d} \  p_j$ and 
	\begin{equation}
	\label{eq:constcgen}
	c(\beta^\star)=\frac{\mathrm{Var}(\epsilon_k)}{p_m^2}
	+\left(\frac{(2+5 p_m)  (1-p_m)}{p_m^3}\right)\gamma^2\|\coeff^\star\|^2 .
	\end{equation}
\end{theorem}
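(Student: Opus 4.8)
The plan is to reduce \Cref{theo:Bachresultgen} to a direct application of Theorem~1 of \citet{bach2013non}, which gives precisely a bound of the form $\mathbb{E}[R(\bar\coeff_k)-R(\coeff^\star)]\leq \tfrac{1}{2k}\bigl(\tfrac{\sqrt{\sigma^2 d}}{1-\sqrt{\alpha L}}+\tfrac{\|\coeff_0-\coeff^\star\|}{\sqrt\alpha}\bigr)^2$ for averaged constant-step-size SGD applied to a least-squares-type objective, provided the stochastic gradients satisfy three structural hypotheses: (i) they are unbiased estimates of $\nabla R$; (ii) each realization is the gradient of an a.s.\ convex, a.s.\ $L$-smooth ``primitive'' function (co-coercivity); and (iii) the noise at the optimum is bounded in covariance by $\sigma^2 H$. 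First I would check each hypothesis against the lemmas already proved: unbiasedness is exactly \Cref{lem:unbiased}; the co-coercivity and the uniform smoothness constant $L\leq p_m^{-2}\max_k\|X_{k:}\|^2$ are exactly \Cref{lem:covtheo} (and under the boundedness assumption $\|X_{i:}\|\leq\gamma$ this gives $L\leq \gamma^2/p_m^2$ a.s., so the step-size condition $\alpha\leq 1/(2L)$ is meaningful); and the covariance domination $\mathbb{E}[\tilde g_k(\coeff^\star)\tilde g_k(\coeff^\star)^T]\preccurlyeq c(\coeff^\star)H$ is exactly \Cref{lem:noisetheo}(\ref{ass:noisecov}). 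Plugging $\sigma^2 = c(\coeff^\star)$ into Bach--Moulines then yields the stated inequality verbatim.

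The one genuinely new piece of work in this theorem, as opposed to the preceding lemmas, is producing the \emph{explicit} value of $c(\coeff^\star)$ displayed in \eqref{eq:constcgen}, since \Cref{lem:noisetheo} only asserts the \emph{existence} of such a constant. So the core of the proof is a careful computation of an upper bound on $\mathbb{E}[\|\tilde g_k(\coeff^\star)\|^2 \mid \mathcal F_{k-1}]$ — or more precisely on the operator norm of the noise covariance relative to $H$ — in terms of the model quantities $\mathrm{Var}(\epsilon_k)$, $\gamma$, $\|\coeff^\star\|$ and $p_m$. I would start from the defining expression \eqref{eq:gradMCARgeneral} evaluated at $\coeff=\coeff^\star$, substitute $y_k = X_{k:}^T\coeff^\star + \epsilon_k$ and $\tilde X_{k:} = X_{k:}\odot D_{k:}$, and split $\tilde g_k(\coeff^\star)$ into a term linear in $\epsilon_k$ and a term depending only on $(X_{k:},D_{k:})$. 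The $\epsilon_k$-term is $P^{-1}(X_{k:}\odot D_{k:})\epsilon_k$; taking expectations over $D$ and $\epsilon$ (independent, $\delta_{kj}\sim\mathcal B(p_j)$, $\epsilon_k$ centered with variance $\mathrm{Var}(\epsilon_k)$) contributes the $\mathrm{Var}(\epsilon_k)/p_m^2$ piece after bounding $P^{-2}\preccurlyeq p_m^{-2}\mathrm I$ and comparing to $H$. The remaining, purely-design term is where the debiasing correction $-(\mathrm I-P)P^{-2}\mathrm{diag}(\tilde X_{k:}\tilde X_{k:}^T)\coeff^\star$ exactly cancels the mean, so that what survives is a zero-mean fluctuation whose second moment one bounds by expanding the product, using $\|X_{k:}\|\leq\gamma$, and carefully tracking the powers of $p_j^{-1}$ that arise from the Bernoulli moments (this is the source of the $p_m^{-3}$ and the combinatorial factor $(2+5p_m)(1-p_m)$).

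The main obstacle, then, is exactly this moment computation: one must expand a quartic expression in the coordinates of $X_{k:}$ weighted by products of Bernoulli masks, compute the relevant mixed moments of the $\delta_{kj}$ (which differ according to whether indices coincide), reassemble the terms, and bound the resulting quadratic form in $\coeff^\star$ by a scalar multiple of $H$ rather than merely of the identity — the latter, as the sketch of \Cref{lem:noisetheo} notes, would be too lossy. I would organize this by writing $\mathbb{E}[\tilde g_k(\coeff^\star)\tilde g_k(\coeff^\star)^T]$ as a sum of a few named blocks (the $\epsilon$-block, the cross block which vanishes by independence and centering, and the design block), dominating each block separately in the $\preccurlyeq$ order by a multiple of $H$, and finally collecting the constants into the closed form \eqref{eq:constcgen}. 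Once $c(\coeff^\star)$ is pinned down, invoking \citep[Theorem~1]{bach2013non} with the verified hypotheses closes the argument; everything else is bookkeeping. The detailed calculations are deferred to the appendix, consistent with where \Cref{lem:noisetheo,lem:covtheo} are proved.
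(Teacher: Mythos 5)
Your proposal is correct and matches the paper's proof: \Cref{theo:Bachresultgen} is obtained precisely by verifying the hypotheses of Theorem~1 in \citet{bach2013non} through \Cref{lem:unbiased,lem:noisetheo,lem:covtheo}, and the explicit constant $c(\beta^\star)$ in \eqref{eq:constcgen} comes from exactly the moment computation you describe, which the paper carries out inside the proof of \Cref{lem:noisetheo} (Property~3) by decomposing $\mathbb{E}[\tilde{g}_{k}(\beta^\star)\tilde{g}_{k}(\beta^\star)^T]$ into blocks $T_1$, $T_2+T_2^T$, $T_3$ and dominating each by a scalar multiple of $H$. Your grouping of the blocks (isolating the part linear in $\epsilon_k$ so that the cross term vanishes) is only an organizational variant of the same calculation.
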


Note that in \Cref{theo:Bachresultgen}, the expectation is taken over the randomness of the observations $ (X_{i:},y_i,D_{i:})_{1\le i \le k} $.  
The bounded features assumption in \Cref{theo:Bachresultgen} is mostly convenient for the readability, but it can be relaxed at the price of milder but more technical assumptions and proofs (typically bounds on quadratic mean instead of a.s.\ bounds). 


\begin{remark}[Finite-sample setting] \label{theo:Bachempiricalgen}
    Similar results as \Cref{theo:Bachresultgen} can be derived in the case of finite-sample setting. For the sake of clarity, they are made explicit hereafter:
	for any constant step-size $\alpha \leq \frac{1}{2L}$,   \Cref{alg:SGDav} ensures that for any $ k\le n $:
$	\mathbb{E}\left [R(\bar{\coeff}_k)-R(\coeff^\star)] | \mathcal D_n\right ] 	\leq\frac{1}{2k}\left(\frac{\sqrt{c(\beta^\star)\dimvar}}{1-\sqrt{\pas L}}+\frac{\|\coeff_0-\coeff^\star\|}{\sqrt{\pas}}\right)^2
$	
	with $L$  given in \Cref{eq:smooth_const} and 
    $
    	c(\beta^\star)=\frac{\mathrm{Var}(\epsilon_k)}{p_m^2}
	    +\left(\frac{(2+5 p_m)  (1-p_m)}{p_m^3}\right)\max_{1\leq i \leq n} \|X_{i:}\|^2\|\coeff^\star\|^2 .
	$
\end{remark}	

\paragraph{Convergence rates for the iterates.}
	Note that if a Ridge regularization is considered, the regularized function to minimize $R(\coeff) + \lambda\|\coeff\|^2$ is $2\lambda$-strongly convex. \Cref{theo:Bachresultgen} and \Cref{theo:Bachempiricalgen} then directly provide the following bound on the iterates:
	$\mathbb{E}\left[ \left\|\overline{\beta}_k-\beta^\star\right\|^2\right] \leq \frac{1}{2\lambda k}\left(\frac{\sqrt{c(\beta^\star)\dimvar}}{1-\sqrt{\pas L}}+\frac{\|\coeff_0-\coeff^\star\|}{\sqrt{\pas}}\right)^2.$

\paragraph{Additional comments.} We highlight the following points:
\begin{itemize}[topsep=0pt,itemsep=1pt,leftmargin=*]
	\item  In \Cref{theo:Bachresultgen}, the expected excess risk is upper bounded by (a) \emph{a variance} term, that grows with the noise variance and is increased by the missing values, and (b) \emph{a bias} term, that accounts for the importance of the initial distance between the starting point $ \coeff_0 $ and the optimal one $ \coeff^{\star} $. 
	\item  The optimal convergence rate is achieved for a \emph{constant} learning rate $ \alpha $. One could for example choose $ \alpha = \frac{1}{2L} $, that does \emph{not decrease} with the number of iterations. In such a situation, both the \emph{bias}  and \emph{variance} terms scale as $ k^{-1} $. Remark that convergence of the averaged SGD with constant step-size only happens for least squares regression, because the un-averaged iterates converge to a limit distribution whose mean is exactly $ \beta^* $~\cite{bach2013non,dieuleveut2017bridging}.
	\item The expected risk scales as $ n^{-1} $ after $ n  $ iterations, without strong convexity constant involved. 
	\item For the generalization risk $ R $, this rate of $ n^{-1} $  is known to be statistically optimal for least-squares regression: under reasonable assumptions, no algorithm, even more complex than averaged SGD or without missing observations, can have a better dependence in $ n $ \cite{tsybakov2003optimal}. 
	\item In the complete case, i.e. when $p_1=\hdots=p_d =1$,  \Cref{theo:Bachresultgen,theo:Bachempiricalgen} meet the results from \citet[Theorem 1]{bach2013non}. Indeed, in such a case, $c(\beta^\star)=\mathrm{Var}(\epsilon_k)$. 
	\item The noise variance coefficient $c(\beta^{\star})$ includes (i) a first term as a classical noise one, proportional to the model variance, and increased by the missing values occurrence to $\frac{\mathrm{Var}(\epsilon_k)}{p_m^2}$; (ii) the second term is upper-bounded by $\frac{ 7 (1-p_m)}{p_m^3}\cdot\gamma^2\|\coeff^\star\|^2$ corresponds to the multiplicative noise induced by the imputation by 0 and 
	gradient debiasing.
	It naturally increases as the radius $\gamma^2$ of the observations increases (so does the imputation error), and vanishes if there are no missing values ($p_m=1$).
\end{itemize}

\begin{remark}[Only one epoch] 
\label{rem:noseveralpasses}
It is important to notice that in a finite-sample setting, as covered by Remark \ref{theo:Bachempiricalgen}, given a maximum number of $ n $ observations,
our convergence rates are only valid for $ k\le n $: the theoretical bound holds only for one pass on the input/output pairs. Indeed, afterwards, we cannot build unbiased gradients of the risk.
\end{remark}

\subsection{What about empirical risk minimization (ERM)?}
\label{rem:ERM}

\paragraph{Theoretical locks.}
{Note that the translation of the results in  \Cref{theo:Bachempiricalgen} in terms of empirical risk convergence is still an open issue. The heart of the problem is that it seems  really difficult to obtain a sequence of unbiased gradients of the empirical risk.
\begin{itemize}[topsep=0pt,itemsep=1pt,leftmargin=*]
    \item Indeed, to obtain unbiased gradients, the data should be processed \emph{only once} in \Cref{alg:SGDav}: if we consider the gradient of the loss with respect to an observation $k$, we obviously need the binary mask $D_k$ and the current point $\coeff_{k-1}$ to be independent for the correction relative to the missing entries to make sense. As a consequence, no sample can be used twice -
    in fact, running multiple passes over a finite sample could result in over-fitting the missing entries.
    \item Therefore, with a finite sample at hand, the sample used at each iteration should be chosen \emph{without replacement} as the algorithm runs. But even in the complete data case, sampling without replacement induces a bias on the chosen direction~\cite{gurbuzbalaban2015random,Jain19}. Consequently, \Cref{lem:unbiased} does not hold for the empirical risk instead of the theoretical one. This issue is not addressed in \cite{ma2017stochastic}, {unfortunately making the proof of their result invalid/wrong.} 
\end{itemize}}

\paragraph{Comparison to \citet{ma2017stochastic}.}
Leaving aside the last observation, we can still comment on the bounds in  \cite{ma2017stochastic} for the empirical risk without averaging.
As they do not use averaging but only the last iterate, their convergence rate (see Lemma 1 in their paper) is only studied for $ \mu- $strongly convex problems and is expected to be larger (i) by a factor $ \mu^{-1} $, due to the choice of their decaying learning rate,  and (ii) by a $ \log n $ factor due to using the last iterate and not the averaged one \citep{shamir2013stochastic}.
Moreover, the strategy of the present paper does not require to access the strong convexity constant, which is generally out of reach, if no explicit regularization is used. 
More marginally, we provide the proof of the co-coercivity of the adjusted gradients (\Cref{lem:covtheo}), which is required to derive the convergence results, and which was also missing in  \citet{ma2017stochastic}. {A more detailed discussion on the differences between the two papers is given in \Cref{app:MN}.}

\paragraph{ERM hindered by NA.}
{It is also interesting to point out that with missing features,
\emph{neither} the generalization risk $ R $, \emph{nor} the empirical risk $ R_n $ are observed (i.e., only approximations of their values or gradients can be computed). As a consequence,   one cannot expect to minimize those functions with unlimited accuracy. This stands in contrast to the \emph{complete observations setting}, in which the empirical risk $ R_n $ is known exactly.
As a consequence, with missing data, empirical risk loses its main asset - being an observable function that one can minimize with high precision. Overall it is both more natural and easier to focus on the generalization risk.}

\subsection{On the impact of missing values}

\paragraph{Marginal values of incomplete data.}
	An important question in practice is to understand how much information has been lost because of the incompleteness of the observations. In other words, it is better to access 200 input/output pairs with a probability 50\% of observing each feature on the inputs, or to observe 100 input/output pairs with complete observations? 
	
	Without missing observations, the variance bound in the expected excess risk is given by \cref{theo:Bachresultgen} with $ p_m=1 $: it scales as $   O\left(\frac{\mathrm{Var}(\epsilon_k) d}{k} \right ),$
	while with missing observations it increases to $ O\left (\frac{\mathrm{Var}(\epsilon_k) d}{k p_m^2} + \frac{C(X,\coeff^{\star})}{k p_m^3}\right ).$ As a consequence, the variance upper bound is larger by a factor $ p_m^{-1} $ for the estimator derived from $ k $ incomplete observations than for $ k \times p_m $ complete observations. 
	This suggests that there is a higher gain to collecting fewer complete observations (e.g., 100) than more incomplete ones (e.g., 200 with $ p=0.5 $). However, one should keep in mind that this observation is made by comparing upper bounds thus does not necessarily reflect what would happen in practice.

\paragraph{Keeping only complete observations?}
Another approach to solve the missing data problem is to discard all observations that have at least one missing feature. The probability that one input is complete, under our missing data model is $ \prod_{j=1}^{d} p_j $. In the homogeneous case, the number of complete observations $ k_{co} $ out of a $ k-$sample thus follows a binomial law $ k_{co} \sim \mathcal{B}(k,p^d) $. With only those few observations, the statistical lower bound is $  \frac{\mathrm{Var}(\epsilon_k) d}{k_{co}}$. In expectation, by Jensen inequality, we get that the lower bound on the risk is larger than $  \frac{\mathrm{Var}(\epsilon_k) d}{  k p^d}$. 

{Our strategy thus leads to an \emph{upper-bound} which is typically $ p^{d-3} $ times smaller than the \emph{lower bound} on the error of any algorithm relying only on complete observations. For a large dimension or a high percentage of missing values, our strategy is thus provably several orders of magnitude smaller than the best possible algorithm that would only rely on complete observations - e.g., if $p=0.9$ and $d=40$, the error of our method is at least 50 times smaller. }

Also note that in Theorem~1 and Lemma~1 in~\citet{ma2017stochastic}, the convergence rate with missing observations suffers from a similar multiplicative factor $ O(p^{-2} + \kappa p^{-3} ) $.

\section{Experiments} \label{sec:exp}
\vspace{-0.2cm}
\subsection{Synthetic data}
\vspace{-0.2cm}
Consider the following simulation setting:
the covariates are normally distributed, $\design_{i:} \overset{i.i.d.}{\sim} \mathcal{N}(0,\Sigma)$, where $\Sigma$ is constructed using uniform random orthogonal eigenvectors and decreasing eigenvalues $1/k, \: k = 1, \hdots, d$. For a fixed parameter vector $\beta$, the outputs $y_i$ are generated according to the linear model \eqref{eq:linearmodel}, with $\noise_i \sim \mathcal{N}(0,1)$. Setting  $d=10$, we introduce $30\%$ of missing values either with a uniform probability $p$ of missingness for any feature, or with probability $p_j$ for covariate $j$, with $j=1,\hdots, d$. 
Firstly, the three following algorithms are implemented:
\begin{enumerate}[label={(\arabic*)},topsep=0pt]
	\item \textbf{AvSGD}\label{AvSGD} described in Algorithm \ref{alg:SGDav} with a constant step size $\alpha=\frac{1}{2L}$, and $L$ given in \eqref{eq:smooth_const}. 
	\item \textbf{SGD}\label{SGD} from \citep{ma2017stochastic} with iterates
	$\coeff_{k+1}=\coeff_k-\alpha_k\tilde{g}_{i_k}(\beta_k),$
	and decreasing step size $\alpha_k=\frac{1}{\sqrt{k+1}}.$
	\item \textbf{SGD\_cst}\label{SGD_cst} from \citep{ma2017stochastic} with a constant step size $\alpha=\frac{1}{2L}$, where $L$ is given by \eqref{eq:smooth_const}. 
\end{enumerate}

\begin{figure}
\centering
\begin{minipage}{.47\textwidth}
  \centering
  \begin{figure}[H]
	\begin{center}
		\includegraphics[height=0.49\textwidth]{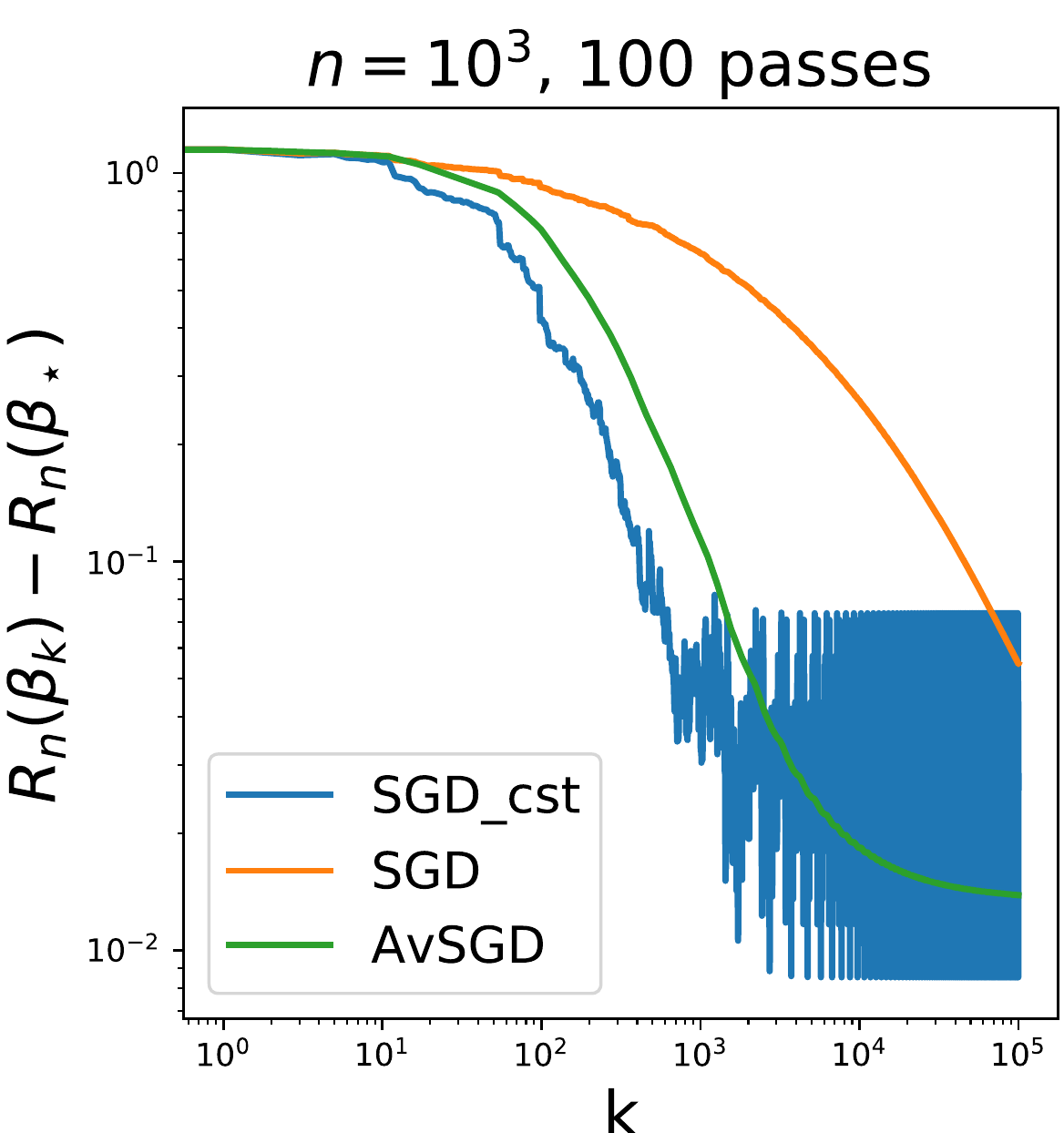}
		\includegraphics[height=0.49\textwidth]{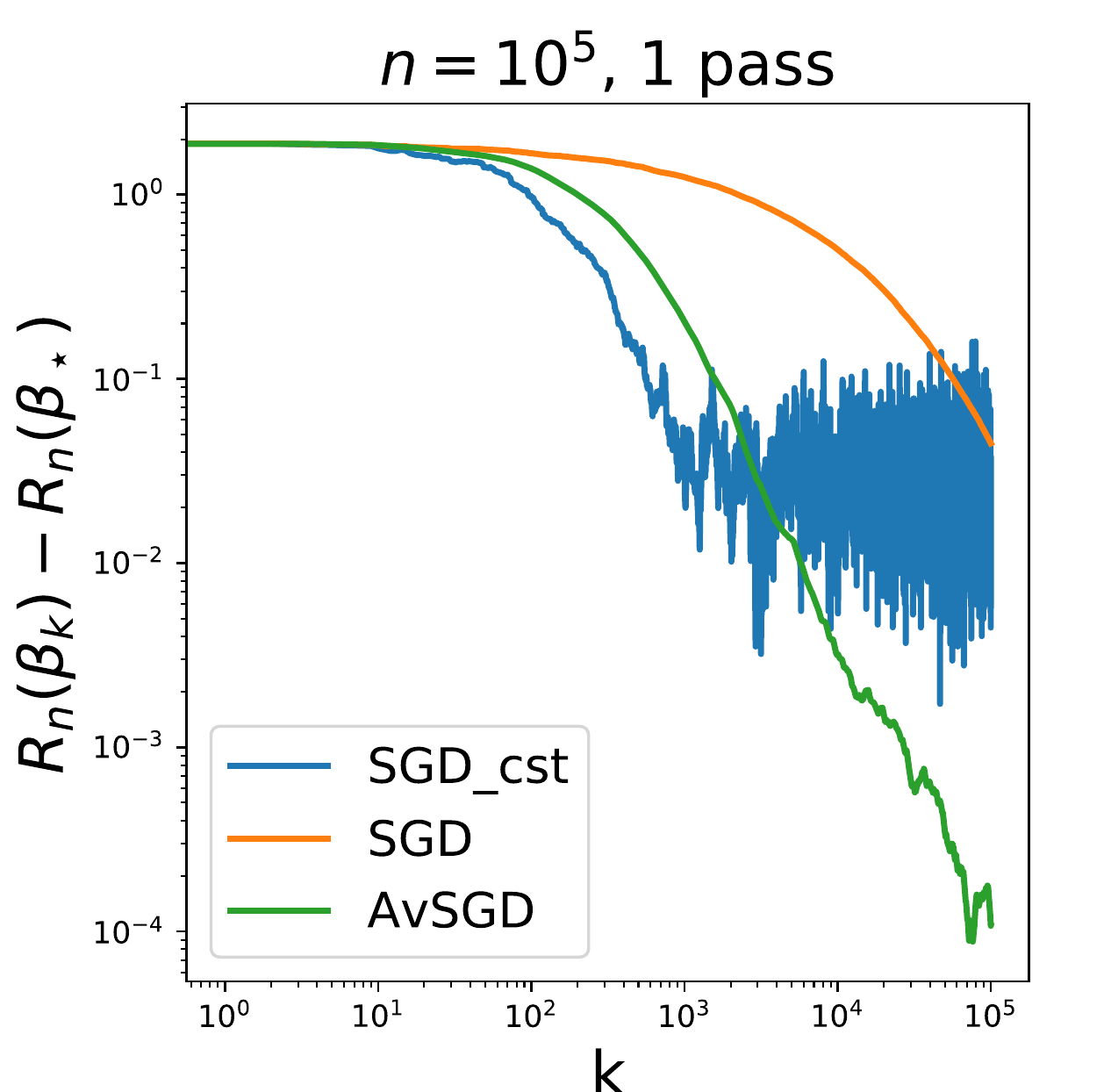}
		\caption{ \label{fig:noseveralpasses} Empirical excess risk $\left(R_\dimob(\coeff_k)-R_\dimob(\coeff^{\star})\right)$. Left: $n=10^3$ and 100 passes. Right:  $n=10^5$ and 1 pass.  $d=10$, 30\% MCAR data. $L$ is assumed to be known in both graphics.}
	\end{center}
  \end{figure}
\end{minipage}%
\hspace{0.04\textwidth}
\begin{minipage}{.47\textwidth}
  \centering
	\begin{center}
		\includegraphics[width=\textwidth]{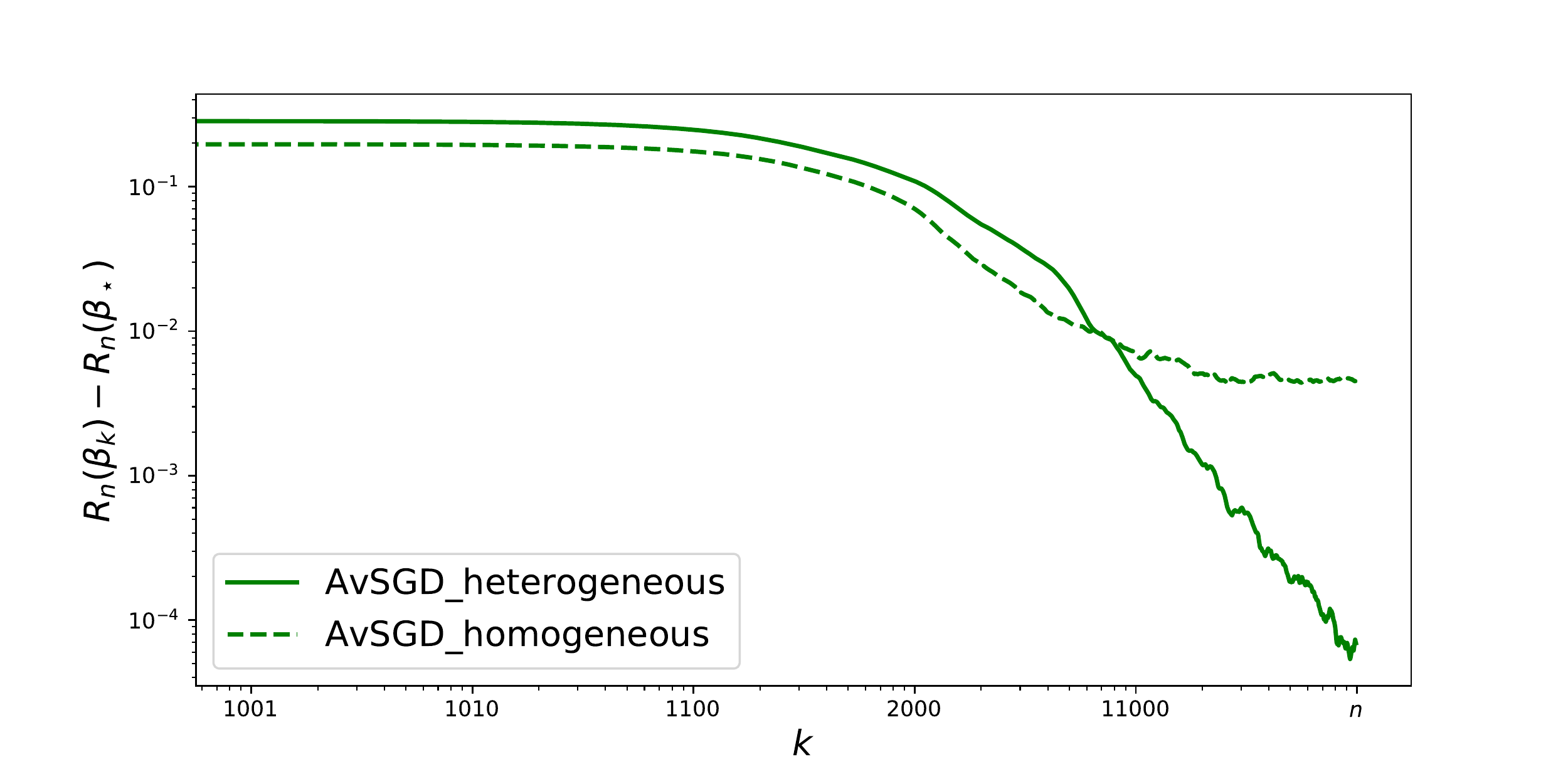}
		\caption{ \label{fig:manyprob}  Empirical excess risk $R_\dimob(\coeff_k)-R_\dimob(\coeff^{\star})$  for synthetic data where $n=10^5$, $d=10$ and with heterogeneous missing values either taking into account the heterogeneity (plain line) in the algorithm or not (dashed line).}
	\end{center}
\end{minipage}
\end{figure}

\paragraph{Debiased averaged vs. standard SGD.} \Cref{fig:noseveralpasses} compares the convergence  of Algorithms \ref{AvSGD}, \ref{SGD} and \ref{SGD_cst}, with either multiple passes or one pass, in terms of excess empirical risk $R_\dimob(\coeff)- R(\beta^{\star})$, with $R_\dimob(\coeff)  := \frac{1}{\dimob} \sum_{i=1}^n f_i(\beta).$
As expected (see \Cref{rem:noseveralpasses,rem:ERM}), multiple passes can lead to saturation: after one pass on the observations, AvSGD does not improve anymore (\Cref{fig:noseveralpasses}, left), while it keeps decreasing in the streaming setting  (\Cref{fig:noseveralpasses}, right). Looking at \Cref{fig:noseveralpasses} (right),  one may notice that
without averaging and with decaying step-size, Algorithm~\ref{SGD} achieves the convergence rate 
$\mathcal{O}\left(\sqrt{\frac{1}{n}}\right)$, 
whereas with constant step-size, Algorithm~\ref{SGD_cst} saturates at an excess risk proportional to $\alpha$ after  $n=10^3$ iterations. As theoretically expected, both methods are improved with averaging. Indeed, Algorithm~\ref{alg:SGDav} converges pointwise with a rate of $\mathcal{O}(\frac{1}{n})$.
\vspace{-0.3em}
\paragraph{About the algorithm hyperparameter.} Note that the Lipschitz constant $L$ given in \eqref{eq:smooth_const} can be either computed from the complete covariates,
or estimated from the incomplete data, see discussion and numerical experiments in \Cref{appsec:Additional-figures}.
\vspace{-0.3em}
\paragraph{Heterogeneous vs. homogeneous missingness.} In Figure \ref{fig:manyprob}, the missing values are introduced with different missingness probabilities, i.e.\ with distinct $(p_j)_{1\leq j \leq d}$ per feature, as described in  \Cref{eq:binarymaskprob}. When taking into account this heterogeneousness, Algorithm~\ref{alg:SGDav} achieves the same convergence rates as in \Cref{fig:noseveralpasses}.
However,  ignoring the heterogeneous probabilities in the gradient debiasing leads to stagnation far from the optimum in terms of empirical excess risk.
\vspace{-0.3em}
\paragraph{Polynomial features.} Algorithm \ref{alg:SGDav} can be adapted to handle missing polynomial features, see \Cref{appsec:poly_features} for a detailed discussion and numerical experiments on synthetic data. 



\subsection{Real dataset 1: Traumabase dataset}
\vspace{-0.5em}
We illustrate our approach on a public health application
with the APHP TraumaBase$^{\mbox{\normalsize{\textregistered}}}$ Group (Assistance Publique - Hopitaux de Paris) on the management of traumatized patients. Our aim is to model the level of platelet upon arrival at the hospital from the clinical data of 15785 patients. The platelet is a cellular agent responsible for clot formation and it is essential to control its levels to prevent blood loss and to decide on the most suitable treatment. 
A better understanding of the impact of the different features  is key to  trauma management.
Explanatory variables for the level of platelet consist in seven quantitative (missing) variables, which have been selected by doctors. In \Cref{tab:signcoef},
one can see the percentage of missing values in each variable, varying from 0 to 16\%, see  \Cref{appsec:Traumabase} for more information on the data.

\vspace{-0.3em}
\paragraph{Model estimation.}
The model parameter estimation is performed either using the AvSGD Algorithm \ref{alg:SGDav} or an Expectation Maximization  (EM) algorithm \cite{dempster1977maximum}. Both methods are compared with the ordinary least squares linear regression in the complete case, i.e. keeping the fully-observed rows only (i.e.\ 9448 rows).
The signs of the coefficients for Algorithm \ref{alg:SGDav} are shown in Table \ref{tab:signcoef}. 

\begin{figure}
\centering
\begin{minipage}{.35\textwidth}
	\centering
\resizebox{0.9\linewidth}{!}{	
    \begin{tabular}{llr}
		Variable & Effect & NA \%  \\ \hline
		Lactate   & $-$  & 16\%\\ 
		$\Delta$.Hemo & $+$ & 16\%\\ 
		VE         &  $-$& 9\% \\ 
		RBC        & $-$ & 8\% \\ 
		SI         & $-$ & 2\%\\ 
		HR         & $+$ & 1\% \\ 
		Age        & $-$ & 0\% \\ \hline
    \end{tabular}}
\caption{\label{tab:signcoef}Percentage of missing features, and effect of the variables on the platelet for the TraumaBase data when the AvSGD algorithm is used. ``$+$'' indicates positive effect while  ``$-$'' negative. }
\end{minipage}%
\hspace{0.05\textwidth}
\begin{minipage}[t]{.59\textwidth}
	\begin{center}
		\includegraphics[width=0.65\textwidth]{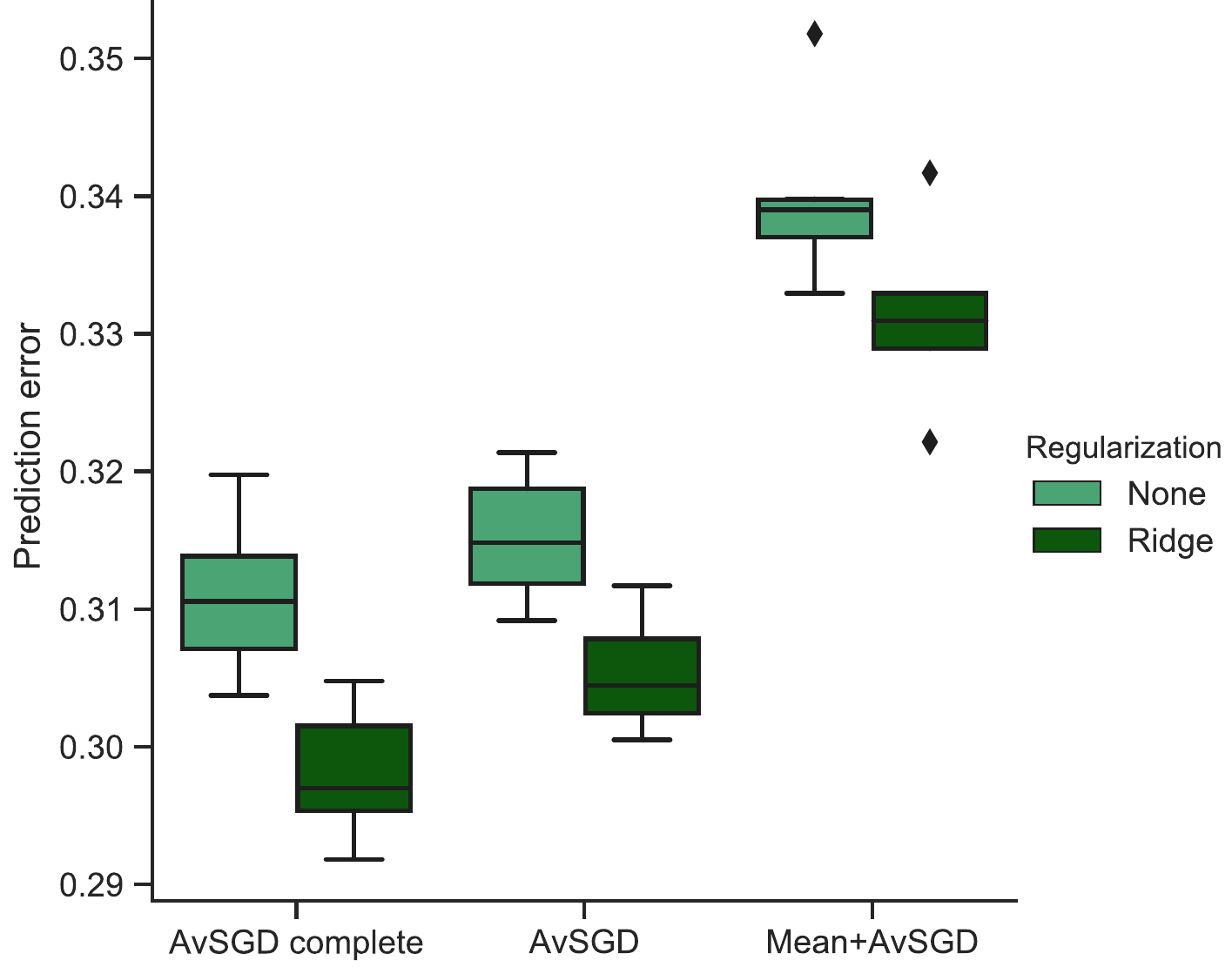}
		\caption{ \label{fig:prediction_supra0}  Prediction error boxplots (over 10 replications) for the Superconductivity data.  AvSGD complete corresponds to applying the AvSGD on the complete data, 
		AvSGD and Mean+AvSGD use the predictions obtained with the estimated parameters $\hat \beta_n^{\textrm{AvSGD}}$ and $\bar{\beta}_n^{\textrm{AvSGD}}$ respectively.}
	\end{center}
	\vspace{-2em}
\end{minipage}
\end{figure}

According to the doctors, a negative effect of shock index (\textit{SI}), vascular filling (\textit{VE}), blood transfusion (\textit{RBC}) and lactate (\textit{Lactacte}) was expected, as they all result in low platelet levels and therefore a higher risk of severe bleeding. However, the effects of delta Hemocue (\textit{Delta.Hemocue}) and the heart rate (\textit{HR}) on platelets are not entirely in agreement with their opinion.
Note that using the linear regression in the complete case and  the EM algorithm lead to the same sign for the variables effects as presented in Table \ref{tab:signcoef}.



\subsection{Real dataset 2: Superconductivity dataset}
\label{subsubsec:superconductivity}
\vspace{-0.5em}
We now consider the Superconductivity dataset (available  \href{https://archive.ics.uci.edu/ml/datasets/Superconductivty+Data}{here}), which contains 81 quantitative features from 21263 superconductors. The goal here is to predict the critical temperature of each superconductor. Since the dataset is initially complete, we introduce $30\%$ of missing values with probabilities $(p_j)_{1\leq j \leq 81}$ for the covariate $j$, {with $p_j$ varying between 0.7 and 1.}
The results are shown in Figure \ref{fig:prediction_supra0} where a Ridge regularization has been added or not. The regularization parameter $\lambda$ (see Remark \ref{rem:ridge}) is chosen by cross validation. 
\vspace{-0.3em}
\paragraph{Prediction performance.}
The dataset is divided into training and test sets (random selection of $70-30\%$). The test set does not contain missing values. 
In order to predict the critical temperature of each superconductor, we compute $\hat{y}_{n+1} = X_{n+1}^T \hat{\beta}$ with $\hat{\beta} = {\beta}_n^\textrm{AvSGD}$ or $\beta_n^\textrm{EM}$. We also impute the missing data naively by the mean in the training set, and apply the averaged stochastic gradient without missing data on this imputed dataset, giving a coefficient model $\bar{\beta}_n^{\textrm{AvSGD}}$. It corresponds to the case where the bias of the imputation has not been corrected.
The prediction quality on the test set is compared according to the relative $\ell_2$ prediction error, $\|\hat{y}-y\|^2/\|y\|^2$. The data is scaled, so that the naive prediction by the mean of the outcome variable leads to a prediction error equal to 1. In Figure \ref{fig:prediction_supra0}, we observe that the SGD strategies give quite good prediction performances.
The EM algorithm is not represented since it is completely out of range (the mean of its prediction error is 0.7), which indicates that it struggles with a large number of covariates. As for the AvSGD Algorithm, it  performs well in this setting. Indeed, with or without regularization, the prediction error  with missing values is very close to the one obtained from the complete dataset. {Note that \Cref{alg:SGDav} is shown to handle missing polynomial features well even in higher dimensions, see \Cref{appsec:poly_features} for a detailed discussion and large-scale experiments on the superconductivity dataset.}

\vspace{-0.3cm}
\section{Discussion}
 \vspace{-0.3cm}
In this work, we thoroughly study  the impact of missing values for Stochastic Gradient Descent algorithm {for Least Squares Regression. We leverage both the power of averaging and a simple and powerful debiasing approach to derive  tight and rigorous convergence guarantees for the generalization risk of the algorithm.}
The theoretical study directly translates into practical recommendations for the users  and a byproduct is the availability of a python implementation of regularized regression with missing values for large scale data, which was not available. Even though we have knocked down some barriers, there are still exciting perspectives to be explored as the robustness of the approach to rarely-occurring covariates, or  dealing with more general loss functions {as well} - for which it is challenging to build a debiased gradient estimator from observations with missing values, or also considering more complex missing-data patterns such as missing-not-at-random mechanisms.




\bibliographystyle{plainnat}
\bibliography{biblio}

\appendix
\renewcommand{\thesection}{S\arabic{section}}
\renewcommand{\thetable}{S\arabic{table}}
\renewcommand{\thefigure}{S\arabic{figure}}
\renewcommand{\thetheorem}{S\arabic{theorem}}
\renewcommand{\thefootnote}{S\arabic{footnote}}

\vskip 0.3in

\section{Discussion on the paper of \citet{ma2017stochastic}}
\label{app:MN}

In this section, we make the theoretical issues unlocked in \citet{ma2017stochastic} explicit.  For clarity, we directly refer to the lemmas and theorems as numbered in the published version  (\url{http://www.global-sci.org/uploads/online_news/NMTMA/201809051633-2442.pdf}), the numbering being slightly different than the arXiv version.
For readability, we translate their method and results with the notation used in the present paper.
In their paper, they consider the finite-sample setting, with at hand $(D_k,\tilde{X}_i)_{1\leq i\leq n}$, in view of minimizing the empirical risk.

As a preamble, let us remind that the contributions of the present paper go far beyond correcting the approach in \citep{ma2017stochastic}: we propose a different algorithm using averaging, that converges faster and in a non-strongly convex regime, with a different proof technique, requiring a more technical proof on the second order moment of the noise, and we allow for heterogeneity in the missing data mechanism.

\subsection{Hurdles to get unbiased gradients of the empirical risk} 
The stochastic gradients in \cite{ma2017stochastic} are not unbiased gradients of the empirical risk (which makes their main result wrong). Indeed, their algorithm uses the debiased direction \eqref{eq:gradMCARgeneral} by sampling uniformly with replacement the $(\tilde{X_k})_k$'s.

For clarity, we highlight both why the result is not technically correct in their paper, and why it is not intuitively possible to achieve the result they give.

   \paragraph{Technically.} The proof of the main Theorem 2.2 (Theorem 2.1 being a direct corollary),
    corresponds to the classical proof in which one upper bounds the  expectation of the mean-squared distance from the iterate at iteration $k+1$ to the optimal point \textbf{conditionally to the iterate at iteration $k$}, or more precisely, conditionally to a $\sigma$-algebra making this iterate measurable. This is typically written $$\mathbb E\left[||\beta_{k+1}-\beta_*^n||^2 |\mathcal F_k\right],$$ where $\beta_*^n$ is the minimizer of the empirical risk $R_n$ and $\beta_{k}$ is $\mathcal F_k$-measurable.
    
    The crux of the proof is then to use \textbf{unbiased gradients conditionally to $\mathcal F_k$}: the property needed is that
    $$\mathbb E\left[ g_{i_{k+1}}(\beta_k) |\mathcal F_k\right] = \nabla R_n (\beta_k).$$
    
    In classical ERM (without missing value) it is done by sampling uniformly at iteration $k+1$ one observation indexed by $i_{k+1}\sim \mathcal U \llbracket 1; n\rrbracket$, \textbf{independently from $\beta_k$}.
     
    In regression with missing data, one has to deal with another source of randomness, the randomness of the mask $D$. In \citet{ma2017stochastic}, Lemma A.1  states that for a random $i\sim \mathcal U \llbracket 1; n\rrbracket$ and a matrix row $A_i$, for a random mask $D$ associated to this row, 
    $$\mathbb E_D [\mathbb E_i g_i(\beta)] = \nabla R_n(\beta).$$
    This lemma is valid.  Unfortunately, its usage in the proof of Theorem 2.2 (page 18, line (ii)), is not, as one \emph{does not  have}:
     $$\mathbb E [ g_{i_{k+1}}(\beta_k) | \mathcal{F}_k] = \nabla R_n(\beta_k),$$
    indeed, 
    \begin{itemize}
        \item either the sample $i_{k+1}$ is chosen uniformly at random in $ \llbracket 1; n\rrbracket$ and $D_{i_{k+1}}$ \textbf{is not} independent from $\beta_k$.
        \item or the sample  $i$ is not chosen uniformly in $ \llbracket 1; n\rrbracket$ (for example without replacement, as we do) and then the gradient is not an unbiased gradient of $R_n$ as the sampling is not uniform anymore.
    \end{itemize}
    
    In other words, the proof would only be valid if \textbf{the mask for the missing entries was re-sampled each time the point is used}, which is of course not realistic for a missing data approach (that would mean that the data has in fact been collected without missing entries). 
    
    \paragraph{Intuition on why it is hard.} A way to understand the impossibility of having a bound for multiple pass on ERM in the context of missing data is to underline that the empirical risk, in the presence of missing data, is an \textbf{unknown function}: its value cannot be computed exactly (see \Cref{rem:ERM}).
    
    As a consequence we can hardly expect that one could minimize it to unlimited accuracy.
    This is very similar to the situation for  the \emph{generalization risk} in a situation \emph{without missing data}: as the function is not observed, it is impossible to minimize it exactly.  Given only $n$ observations, no algorithm can achieve 0-generalization error (and statistical lower bounds \cite{tsybakov2003optimal} prove so).

    \paragraph{Conclusion.} This highlights how difficult it is to be rigorous when dealing with multiple sources of randomness. 
    Unfortunately, none of these limits are discussed in the current version of \citep{ma2017stochastic}.
    This makes the approach and the main theorem of \citep{ma2017stochastic} mathematically invalid. 
    In the present paper, the \emph{generalization risk} is decaying \emph{during the first pass}, and as a consequence, the empirical risk also probably does, but this has not been proved yet.

    In the following paragraph, we give details on the missing technical Lemma.
\subsection{Missing key Lemma in the proof.}  Proving that $(\tilde{f}_k)$ is a.s.\ convex is an important step for convergence, which was missing in the analysis of \cite{ma2017stochastic}. More precisely, in Lemma A.4. in  \cite{ma2017stochastic}, a condition is missing on $G(x)$: $G$ needs to be smooth  \emph{and convex} for its gradient  to satisfy the co-coercivity inequality. Note that this condition was also missing in the paper they refer to \citet{needell2014stochastic} (Indeed, at the third line of the proof of Lemma A.1.  in \citet{needell2014stochastic}, one needs $f$ to be convex for $G$ to be convex). Co-coercivity of the gradient is indeed a characterization of the fact that the function is smooth and convex, see for example \citet{Zhu_Mar_1995}.

\section{Proofs of technical lemmas}\label{supsec:proofs}

\setcounter{equation}{1}
Recall that we aim at minimizing the theoretical risk in both streaming and finite-sample settings. 
\begin{equation}
\label{sup-pb:online2}
\coeff^\star 
= \argmin_{\coeff \in \mathbb{R}^\dimvar} R(\beta) =  \argmin_{\coeff \in \mathbb{R}^\dimvar} \mathbb{E}_{(\design_{i:},y_i)}\left[f_i(\coeff)\right]. 
\end{equation}
\setcounter{theorem}{0}
In the sequel, one consider the following modified gradient direction
\setcounter{equation}{3}
\begin{equation}
\label{sup-eq:gradMCARgeneral}
\tilde{g}_{k}(\coeff_k)=P^{-1}\missdesignzero_{{k}:}\left(\missdesignzero_{{k}:}^TP^{-1}\coeff_k-y_{k}\right)-(\mathrm{I}-P)P^{-2}\mathrm{diag}\left(\missdesignzero_{{k}:}\missdesignzero_{{k}:}^T\right)\coeff_{k}.
\end{equation}

Note that for all $k$, $D_{k:}$ is independent from $(X_{k:},y_{k})$. In what follows, the proofs are derived considering
\begin{equation*}
\mathbb{E}=\mathbb{E}_{(X_{k:},y_{k}),D_{k:}}=
\mathbb{E}_{(X_{k:},y_{k})}\mathbb{E}_{D_{k:}}
\end{equation*}
where $\mathbb{E}_{(X_{k:},y_k)}$ and $\mathbb{E}_{D_{k:}}$   denotes the expectation with respect to the distribution of $(X_{k:},y_k)$ and $D_{k:}$ respectively.

\subsection{Proof of ~\cref{lem:unbiased}}\label{supsec:lem:unbiased}
\begin{lemma} \label{lem:unbiased-app}Let $(\mathcal{F}_k)_{k\geq 0}$ be the following $\sigma$-algebra, 
	 	\begin{equation*}
\mathcal{F}_k=  \sigma (X_{1:},y_1,D_{1:}\dots,X_{k:},y_k,D_{k:}).
	 \end{equation*}
\label{ass:iterationtheo-ap-app} 
The modified gradient $\tilde{g}_{k}(\beta_{k-1})$ in \Cref{sup-eq:gradMCARgeneral} is $\mathcal{F}_{k}$-measurable and
	\begin{equation*}
		\mathbb{E}\left[\tilde{g}_{k}(\coeff_{k-1})\left.\right|\mathcal{F}_{k-1}\right]= \nabla R(\coeff_{k-1}) \quad  \textrm{a.s.}  
	\end{equation*}
\end{lemma}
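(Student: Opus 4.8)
The plan is to verify the two claims separately: measurability first, then the conditional-expectation identity. For measurability, I would note that $\missdesignzero_{k:} = X_{k:} \odot D_{k:}$ is a deterministic function of $(X_{k:}, D_{k:})$, and $y_k$ enters directly, so the expression $\tilde{g}_k(\coeff_{k-1})$ in \Cref{sup-eq:gradMCARgeneral} is a measurable function of $(X_{k:}, y_k, D_{k:})$ and of $\coeff_{k-1}$; since $\coeff_{k-1}$ is built recursively from $\tilde{g}_1,\dots,\tilde{g}_{k-1}$ and hence is $\mathcal{F}_{k-1}$-measurable, the whole thing is $\mathcal{F}_k$-measurable. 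The matrix $P$ is deterministic so it causes no issue.

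For the identity, the key observation (recorded just before the lemma) is that $D_{k:}$ is independent of $(X_{k:}, y_k)$, and both are independent of $\mathcal{F}_{k-1}$, while $\coeff_{k-1}$ is $\mathcal{F}_{k-1}$-measurable. So conditioning on $\mathcal{F}_{k-1}$ amounts to freezing $\coeff := \coeff_{k-1}$ and taking the full expectation $\mathbb{E}_{(X_{k:},y_k)}\mathbb{E}_{D_{k:}}$ over the fresh sample. I would then compute $\mathbb{E}[\tilde{g}_k(\coeff)]$ for a fixed deterministic $\coeff$. The plan is to take the expectation over $D_{k:}$ first, using $\mathbb{E}[\delta_{kj}] = p_j$ and $\mathbb{E}[\delta_{kj}\delta_{kl}] = p_j p_l$ for $j \neq l$, $\mathbb{E}[\delta_{kj}^2] = p_j$. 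Writing $\missdesignzero_{k:} = D_{k:}\odot X_{k:}$ and expanding, the first term $P^{-1}\missdesignzero_{k:}\missdesignzero_{k:}^T P^{-1}\coeff$ has $(j,l)$-entry $p_j^{-1}\delta_{kj} X_{kj} \delta_{kl} X_{kl} p_l^{-1}$; taking $\mathbb{E}_{D_{k:}}$ gives $X_{kj}X_{kl}$ for $j\neq l$ (the $p$'s cancel) and $p_j^{-1} X_{kj}^2$ on the diagonal. The diagonal excess $(p_j^{-1}-1)X_{kj}^2 = \frac{1-p_j}{p_j}X_{kj}^2$ is exactly cancelled by the debiasing term: $\mathbb{E}_{D_{k:}}[(\mathrm{I}-P)P^{-2}\mathrm{diag}(\missdesignzero_{k:}\missdesignzero_{k:}^T)]$ has $j$-th diagonal entry $(1-p_j)p_j^{-2}\cdot p_j X_{kj}^2 = \frac{1-p_j}{p_j}X_{kj}^2$. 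Hence $\mathbb{E}_{D_{k:}}[\tilde{g}_k(\coeff)] = X_{k:}X_{k:}^T\coeff - \mathbb{E}_{D_{k:}}[P^{-1}\missdesignzero_{k:}]y_k = X_{k:}X_{k:}^T\coeff - X_{k:}y_k$, since $\mathbb{E}_{D_{k:}}[p_j^{-1}\delta_{kj}X_{kj}] = X_{kj}$. Finally, taking $\mathbb{E}_{(X_{k:},y_k)}$ yields $H\coeff - \mathbb{E}[X_{k:}y_k] = \nabla R(\coeff)$, because $R(\coeff) = \frac12\mathbb{E}[(\langle X_{k:},\coeff\rangle - y_k)^2]$ has gradient $\mathbb{E}[X_{k:}(\langle X_{k:},\coeff\rangle - y_k)] = H\coeff - \mathbb{E}[X_{k:}y_k]$. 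Substituting $\coeff = \coeff_{k-1}$ gives the claim a.s.

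I do not anticipate a genuine obstacle here — the lemma is essentially a bookkeeping computation. The one point requiring care is the conditioning argument: one must be explicit that, because $(X_{k:},y_k,D_{k:})$ is independent of $\mathcal{F}_{k-1}$ while $\coeff_{k-1}$ is $\mathcal{F}_{k-1}$-measurable, the conditional expectation $\mathbb{E}[\tilde{g}_k(\coeff_{k-1})\mid\mathcal{F}_{k-1}]$ equals the function $\coeff \mapsto \mathbb{E}[\tilde{g}_k(\coeff)]$ evaluated at $\coeff_{k-1}$ (a standard ``freezing'' lemma for conditional expectations). The rest is just carefully tracking the first and second moments of the Bernoulli mask entries and checking that the debiasing term is designed precisely to kill the diagonal bias introduced by dividing $\missdesignzero\missdesignzero^T$ by $P^2$ instead of the correct normalization.
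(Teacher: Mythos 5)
Your proposal is correct and follows essentially the same route as the paper: both proofs freeze $\coeff_{k-1}$ using its $\mathcal{F}_{k-1}$-measurability together with the independence of $(X_{k:},y_k,D_{k:})$ from $\mathcal{F}_{k-1}$, then compute $\mathbb{E}_{D_{k:}}$ via the Bernoulli moment identities (your entrywise expansion is exactly the paper's matrix-form identities $\mathbb{E}_{D_{k:}}[\missdesignzero_{k:}\missdesignzero_{k:}^T]=PX_{k:}X_{k:}^TP-(P-P^2)\mathrm{diag}(X_{k:}X_{k:}^T)$, $\mathbb{E}_{D_{k:}}[\mathrm{diag}(\missdesignzero_{k:}\missdesignzero_{k:}^T)]=P\mathrm{diag}(X_{k:}X_{k:}^T)$ and $\mathbb{E}_{D_{k:}}[\missdesignzero_{k:}]=PX_{k:}$), and finally take $\mathbb{E}_{(X_{k:},y_k)}$ to recover $\nabla R$.
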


\begin{proof}
	\begin{align*}
	\mathbb{E}_{(X_{k:},y_k),D_{k:}}\left[\tilde{g}_k(\coeff_{k-1}) | \mathcal{F}_{k-1}\right]\overset{(i)}{=}& \:  \mathbb{E}_{(X_{k:},y_k),D_{k:}}\left[P^{-1}\missdesignzero_{k:}\missdesignzero_{k:}^TP^{-1}\right]\coeff_{k-1}-\mathbb{E}_{(X_{k:},y_k),D_{k:}}\left[P^{-1}\missdesignzero_{k:}y_k\right] \\
	&-\mathbb{E}_{(X_{k:},y_k),D_{k:}}\left[(\mathrm{I}-P)P^{-2}\mathrm{diag}\left(\missdesignzero_{k:}\missdesignzero_{k:}^T\right)\right]\coeff_{k-1} \\
	\overset{(ii)}{=}&\: \mathbb{E}_{(X_{k:},y_{k})}\left[P^{-1}P\design_{k:}\design_{k:}^TPP^{-1}\coeff_{k-1}-P^{-2}(P-P^2)\mathrm{diag}(\design_{k:}\design_{k:}^T)\coeff_{k-1}-P^{-1}P\design_{k:}y_k\right] \\
	& -\mathbb{E}_{(X_{k:},y_{k})}\left[(\mathrm{I}-P)P^{-2}P\mathrm{diag}\left(\design_{k:}\design_{k:}^T\right)\coeff_{k-1}\right] \\ 
	=& \: \nabla R(\coeff_{k-1}),
	\end{align*}
	In step (i), we use that $\beta_{k-1}$ is $\mathcal{F}_{k-1}$-measurable and $(X_k,y_k,D_k)$ is independent from $\mathcal{F}_{k-1}$. Step (ii) follows from 
	$$
	\left\{
	\begin{array}{cl}
	\mathbb{E}_{D_{k:}}\left[\tilde{X}_{k:}\tilde{X}_{k:^T} \right] &=PX_{k:}X_{k:}^TP-(P-P^2)\mathrm{diag}(X_{k:}X_{k:}^T), \\ 
	\mathbb{E}_{D_{k:}}\left[\mathrm{diag}(\tilde{X}_{k:}\tilde{X}_{k:}^T)\right] & =P\mathrm{diag}({X}_{k:}{X}_{k:}^T), \\
	\mathbb{E}_{D_{k:}}\left[\tilde{X}_{k:}\right]&=P{X}_{k:}.
	\end{array}
	\right.
	$$

\end{proof}

\subsection{Proof of ~\cref{lem:noisetheo}}\label{appsec:lem:noisetheo}
\begin{lemma}
\label{lem:noisetheo-app} 
 The additive noise process $(\tilde{g}_{k}(\coeff^\star))_k$ with $\beta^\star$ defined in \Cref{sup-pb:online2} is $\mathcal{F}_{k}-$measurable and has the following properties:
	\begin{enumerate}
		\item \label{ass:noisezeroexp} $\forall k\geq 0, \: \mathbb{E}[\tilde{g}_{k}(\coeff^\star) \left.\right| \mathcal{F}_{k-1}]=0$ a.s.,
		\item \label{ass:noisebound} $ \forall k\geq 0, \: \mathbb{E}[\|\tilde{g}_{k}(\coeff^\star)\|^2 \left.\right| \mathcal{F}_{k-1}]$ is a.s. finite,
		\item \label{ass:noisecov} $\forall k\geq 0, \: \mathbb{E}[\tilde{g}_{k}(\coeff^\star)\tilde{g}_{k}(\coeff^\star)^T] \preccurlyeq C(\beta^\star)=c(\beta^\star)H,$ where $\preccurlyeq$ denotes the order between self-adjoint operators ($A\preccurlyeq B$ if $B-A$ is positive semi-definite).
	\end{enumerate}
\end{lemma}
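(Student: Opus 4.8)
The plan is to establish the three properties in order, leveraging the tower property of conditional expectation and the independence of $(X_{k:},y_k,D_{k:})$ from $\mathcal{F}_{k-1}$ throughout. For Property~\ref{ass:noisezeroexp}, I would simply invoke \Cref{lem:unbiased}: since $\mathbb{E}[\tilde{g}_k(\beta_{k-1})|\mathcal{F}_{k-1}] = \nabla R(\beta_{k-1})$ a.s.\ for any $\mathcal{F}_{k-1}$-measurable point, and $\beta^\star$ is (trivially) $\mathcal{F}_{k-1}$-measurable, we get $\mathbb{E}[\tilde{g}_k(\beta^\star)|\mathcal{F}_{k-1}] = \nabla R(\beta^\star) = 0$ by the first-order optimality condition defining $\beta^\star$ in \eqref{sup-pb:online2}. $\mathcal{F}_k$-measurability of $\tilde{g}_k(\beta^\star)$ is immediate from the formula \eqref{sup-eq:gradMCARgeneral}, which only involves $\tilde{X}_{k:}$, $y_k$ (hence $X_{k:}$, $D_{k:}$) and the deterministic matrix $P$.

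For Property~\ref{ass:noisebound}, I would expand $\|\tilde{g}_k(\beta^\star)\|^2$ using \eqref{sup-eq:gradMCARgeneral}, bound the two summands separately by the triangle inequality, and note that each term is a polynomial of degree at most $4$ in the entries of $\tilde{X}_{k:}$ (which are bounded in magnitude by those of $X_{k:}$) times $y_k$ or $\|\beta^\star\|$, with coefficients controlled by $P^{-1}$, i.e.\ by $p_m^{-1}$. Since $\mathbb{E}[\|X_{k:}\|^4]$ need not be assumed finite in general, I would instead rely on the standing assumption that $\mathbb{E}[\|X_{k:}\|^2]$ and $\mathbb{E}[\|y_k\|^2]$ are finite together with the a.s.\ boundedness $\|X_{i:}\|\le\gamma$ used from \Cref{theo:Bachresultgen} onward — or more simply just reproduce the computation of \citep[Lemma~4]{ma2017stochastic}, which gives an explicit finite bound on this conditional second moment. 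Since $(X_{k:},y_k,D_{k:})$ is independent of $\mathcal{F}_{k-1}$, the conditional expectation equals the unconditional one, which is finite, so the a.s.\ claim follows.

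The main obstacle is Property~\ref{ass:noisecov}, the structured-noise bound $\mathbb{E}[\tilde{g}_k(\beta^\star)\tilde{g}_k(\beta^\star)^T]\preccurlyeq c(\beta^\star)H$. As the sketch already flags, the naive route $\tilde{g}_k\tilde{g}_k^T\preccurlyeq\|\tilde{g}_k\|^2 I$ only yields a bound proportional to $I$, not to $H$, and is therefore useless for obtaining the $n^{-1}$ rate. The plan is to write $\tilde{g}_k(\beta^\star) = \tilde{g}_k(\beta^\star) - \nabla f_k(\beta^\star) + \nabla f_k(\beta^\star)$ where $\nabla f_k(\beta^\star) = X_{k:}(X_{k:}^T\beta^\star - y_k) = -\epsilon_k X_{k:}$ is the complete-data gradient noise, expand the outer product into the cross terms, and bound each piece. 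The term $\mathbb{E}[\epsilon_k^2 X_{k:}X_{k:}^T]$ contributes $\mathrm{Var}(\epsilon_k)H$ directly (using $\epsilon_k\perp X_{k:}$); the remaining ``imputation-error'' terms must each be shown to be $\preccurlyeq (\text{const})\,H$ by taking the expectation over $D_{k:}$ first (for fixed $(X_{k:},y_k)$), which produces diagonal and rank-one corrections in terms of $P$, $P^{-1}$, $P^{-2}$, $\mathrm{diag}(X_{k:}X_{k:}^T)$ and $X_{k:}X_{k:}^T$; then one uses elementary facts such as $\mathrm{diag}(X_{k:}X_{k:}^T)\preccurlyeq \|X_{k:}\|^2 I$ combined with $\gamma^2 I$-type bounds, and finally $\mathbb{E}_{X}[\cdots]\preccurlyeq (\text{scalar})\,\mathbb{E}_X[X_{k:}X_{k:}^T] = (\text{scalar})H$ by pulling out the a.s.\ bound $\|X_{k:}\|\le\gamma$. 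Carefully tracking the powers of $p_m^{-1}$ through each term is what produces the explicit constant $c(\beta^\star) = \mathrm{Var}(\epsilon_k)/p_m^2 + \frac{(2+5p_m)(1-p_m)}{p_m^3}\gamma^2\|\beta^\star\|^2$; I expect this bookkeeping — ensuring every cross term is genuinely dominated by a multiple of $H$ rather than merely of $I$ — to be the technically delicate part, deferred to the appendix.
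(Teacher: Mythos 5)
Your treatment of Properties \ref{ass:noisezeroexp} and \ref{ass:noisebound} matches the paper's: Property \ref{ass:noisezeroexp} is exactly \Cref{lem:unbiased} evaluated at the deterministic point $\beta^\star$ plus $\nabla R(\beta^\star)=0$, and Property \ref{ass:noisebound} is obtained by the same explicit computation of $\mathbb{E}_{D_{k:}}$ of each summand (the resulting bound involves fourth moments of $X_{k:}$, finite under the a.s.\ boundedness, as you note). For Property \ref{ass:noisecov} your decomposition differs slightly from the paper's: you center at the complete-data gradient $\nabla f_k(\beta^\star)=-\epsilon_k X_{k:}$, whereas the paper directly expands the outer product of the two summands of $\tilde g_k(\beta^\star)$ into $T_1+T_2+T_2^T+T_3$. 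Your variant is legitimate and even slightly cleaner for the cross terms, since $\mathbb{E}_{D_{k:}}[\tilde g_k(\beta^\star)\mid X_{k:},y_k]=\nabla f_k(\beta^\star)$ makes them vanish; the constants then distribute differently across terms but are of the same order.

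However, there is a genuine gap in your plan for Property \ref{ass:noisecov}: the one concrete tool you invoke for the imputation-error terms, namely $\mathrm{diag}(X_{k:}X_{k:}^T)\preccurlyeq\|X_{k:}\|^2 I$ ``combined with $\gamma^2 I$-type bounds,'' produces bounds proportional to the identity, and $\mathbb{E}_X[c\,I]=c\,I$ cannot be converted into $c'H$ without a lower bound on the spectrum of $H$ --- which would reintroduce exactly the condition-number dependence the lemma is designed to avoid (this is the same failure mode as the naive $\tilde g_k\tilde g_k^T\preccurlyeq\|\tilde g_k\|^2I$ route that the sketch already rules out). The actual content of the paper's proof is a collection of term-by-term semidefinite dominations by scalar multiples of $X_{k:}X_{k:}^T$ itself, e.g.\ $P^{-1}\tilde X_{k:}\tilde X_{k:}^TP^{-1}\preccurlyeq p_m^{-2}\tilde X_{k:}\tilde X_{k:}^T$, $\tilde X_{k:}\tilde X_{k:}^T\preccurlyeq X_{k:}X_{k:}^T$, and crucially $\mathrm{diag}(X_{k:}X_{k:}^T)\beta^\star\beta^{\star T}\mathrm{diag}(X_{k:}X_{k:}^T)\preccurlyeq\|\mathrm{diag}(X_{k:})\beta^\star\|^2\,X_{k:}X_{k:}^T$ (and its analogue with $\mathrm{diag}(\beta^\star\beta^{\star T})$ in the middle). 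Each of these is proved by a dedicated complete-the-square argument showing that the off-diagonal coefficients dominate the geometric mean of the diagonal ones, which works only because every entry of these matrices carries the common factor $X_{kj}X_{kl}$. Note in particular that a matrix such as $\mathrm{diag}(X_{k:}X_{k:}^T)$ alone is \emph{not} dominated by any scalar multiple of $X_{k:}X_{k:}^T$ (take $v$ orthogonal to $X_{k:}$), so the specific algebraic structure of each term must be exploited; only after these dominations does taking $\mathbb{E}_X$ yield a multiple of $H$. These inequalities are the heart of Property \ref{ass:noisecov} and are absent from your proposal.
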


\begin{proof}
	
	\underline{\ref{ass:noisezeroexp}} The first point is easily verified using Lemma \ref{ass:iterationtheo} combined with $\nabla R(\coeff^{\star})=0$ by \eqref{sup-pb:online2}.
	\medbreak
	\underline{\ref{ass:noisebound}} 
	Let us first remark that by independence $\mathbb{E}[\|\tilde{\gradRbis}(\coeff^\star)\|^2 \left.\right| \mathcal{F}_{k-1}]=\mathbb{E}[\|\tilde{\gradRbis}(\coeff^\star)\|^2]$. Then,
	$$\mathbb{E}[\|\tilde{\gradRbis}(\coeff^\star)\|^2]\leq \frac{1}{p_m^2}\mathbb{E}\left[\|\design_{k:}\|^2\left(\missdesignzero_{k:}^TP^{-1}\coeff^\star-y_k\right)^2 \right]+\frac{(1-p_m)^2}{p_m^2}\mathbb{E}\left[\lVert P^{-1}\mathrm{diag}\left(\missdesignzero_{k:}\missdesignzero_{k:}^T\right)\coeff^{\star}\rVert^2\right].$$
	We decompose the computation with respect to $\mathbb{E}_{D_{k:}}$ first,
	\begin{align*}
	\mathbb{E}_{D_{k:}}\left[\left(\missdesignzero_{k:}^TP^{-1}\coeff^\star-y_k\right)^2\right]&=\mathbb{E}_{D_{k:}}\left[(\missdesignzero_{k:}^TP^{-1}\coeff^\star)^2\right]-2y_k\mathbb{E}_{D_{k:}}\left[\missdesignzero_{k:}^TP^{-1}\coeff^\star\right]+\pred_k^2 \\
	&=\mathbb{E}_{D_{k:}}\left[\left(\sum_{j=1}^{d} \missdesignzero_{kj}p_j^{-1}\coeff_{ j}^{\star}\right)^2\right]-2\pred_k\mathbb{E}_{D_{k:}}\left[\sum_{j=1}^{d}\missdesignzero_{kj}p_j^{-1}\coeff_{j}^\star\right]+\pred_k^2 \\
	&=\sum_{j=1}^{d}\mathbb{E}_{D_{k:}}\left[\missdesignzero_{kj}^2p_j^{-2}\coeff_{j}^{\star 2}\right]+2\sum_{l<j} \mathbb{E}_{D_{k:}}\left[\missdesignzero_{kj}\missdesignzero_{kl}p_l^{-1}\coeff_{j}^\star\coeff_{l}^\star\right]-2\pred_k \sum_{j=1}^{d}X_{kj}\coeff_{j}^\star+\pred_k^2 \\
	&=\sum_{j=1}^{d}p_j^{-1}X_{kj}^2\coeff_{ j}^{\star 2}+2\sum_{l<j}X_{kj}X_{kl}\coeff_{j}^\star\coeff_{l}^\star - 2\pred_k\sum_{j=1}^{d}X_{kj}\coeff_{j}^\star+\pred_k^2 \\
	&=(\design_{k:}^T\coeff^\star-\pred_k)^2+\sum_{j=1}^{d}(p_j^{-1}-1)X_{kj}^2\coeff_{j}^{\star 2}, 
	\end{align*}
	which gives 
	\setcounter{equation}{7 }
	\begin{equation}\label{eq:proofT1exp}
	\mathbb{E}_{D_{k:}}\left[\left(\missdesignzero_{k:}^TP^{-1}\coeff^\star-y_k\right)^2\right]\leq (X_{k:}^T\coeff^\star-\pred_k)^2+\frac{1-p_m}{p_m}\coeff^{\star T}\textrm{diag}(X_{k:}X_{k:}^T)\coeff^\star.
	\end{equation}
	As for the second term,
	\begin{align*}
	\mathbb{E}_{D_{k:}}\left[\|P^{-1}\textrm{diag}(\missdesignzero_{k:}\missdesignzero_{k:}^T)\coeff^\star\|^2\right]&=\mathbb{E}_{D_{k:}}\left[\sum_{j=1}^{d}\missdesignzero_{kj}^4p_j^{-2}\coeff_{ j}^{\star 2}\right] \\
	&=\sum_{j=1}^{d}X_{kj}^4p_j^{-1}\coeff_{j}^{\star 2} \\
	&\leq \frac{1}{p_m}\sum_{j=1}^{d} X_{ij}^4\coeff_{j}^{\star 2} \\
	&\leq \frac{1}{p_m}\left(\sum_{j=1}^{d} X_{kj}^2\right)\left(\sum_{j=1}^{d} X_{kj}^2\coeff_{j}^{\star 2}\right) \\
	&=\frac{1}{p_m}\|X_{k:}\|^2\coeff^{\star T}\textrm{diag}(X_{k:}X_{k:}^T)\coeff^{\star}
	\end{align*}
	Finally, one obtains
	$$\mathbb{E}[\|\tilde{\gradRbis}(\coeff^\star)\|^2 \left.\right| \mathcal{F}_{k-1}]\leq\frac{1}{p_m^2}\mathbb{E}_{(X_{k:},y_k)}\left[(\epsilon_k)^2\|X_{k:}\|^2\right]+\frac{(1-p_m)+(1-p_m)^2}{p_m^3}\mathbb{E}_{(X_{k:},y_k)}\left[\|X_{k:}\|^2\coeff^{\star T}\textrm{diag}(X_{k:}X_{k:}^T)\coeff^\star\right].$$ 
	
	\underline{\ref{ass:noisecov}} 
	We aim at proving there exists $H$ such that $$\mathbb{E}[\tilde{\gradRbis}(\coeff^\star)\tilde{\gradRbis}(\coeff^\star)^T] \preccurlyeq C=cH.$$
	Simple computations lead to: 
	$$\mathbb{E}[\tilde{g}_k(\coeff^\star)\tilde{g}_k(\coeff^\star)^T]=\mathbb{E}[T_1+T_2+T_2^T+T_3], $$
	with:
	\begin{align*}
	T_1&=(\tilde{\design}_{k:}^TP^{-1}\coeff^\star-\pred_k)^2P^{-1}\tilde{\design}_{k:}\tilde{\design}_{k:}^T P^{-1},\\	
	T_{2}&=-(\tilde{\design}_{k:}^TP^{-1}\coeff^\star-\pred_k)P^{-1}\tilde{\design}_{k:}\coeff^{\star T}\mathrm{diag}(\tilde{\design}_{k:}\tilde{\design}_{k:}^T)P^{-2}(I-P),\\
	T_{3}&=(I-P)P^{-2}\mathrm{diag}(\tilde{\design}_{k:}\tilde{\design}_{k:}^T)\coeff^\star \coeff^{\star T} \mathrm{diag}(\tilde{\design}_{k:}\tilde{\design}_{k:}^T)P^{-2}(I-P).
	\end{align*}
	
	\paragraph{Bound on $T_1$.}
	
	For the first term, we use 
	\begin{equation}\label{eq:proofboundP}
	P^{-1}\tilde{\design}_{k:}\tilde{\design}_{k:}^T P^{-1}\preccurlyeq \frac{1}{p_m^2}\tilde{\design}_{k:}\tilde{\design}_{k:}^T,
	\end{equation}
	since for all vector $v\neq 0, \: v^T\left(\frac{1}{p_m^2}\tilde{\design}_{k:}\tilde{\design}_{k:}^T - P^{-1}\tilde{\design}_{k:}\tilde{\design}_{k:}^T P^{-1}\right)v \geq 0$,
	\begin{align*}
	&\sum_{j=1}^{d} \left(\frac{1}{p_m^2}-\frac{1}{p_j^2}\right)\tilde{\design}_{kj}^2
	v_j^2+2\sum_{1\leq j<l\leq d } \left(\frac{1}{p_m^2}-\frac{1}{p_j p_l}\right)\tilde{\design}_{kj}\tilde{\design}_{kl}v_jv_l \\
	&\overset{(iii)}{\geq} \sum_{j=1}^{d} \left(\frac{1}{p_m^2}-\frac{1}{p_j^2}\right)\tilde{\design}_{kj}^2
	v_j^2 +2\sum_{1\leq j<l\leq d } \sqrt{\left(\frac{1}{p_m^2}-\frac{1}{p_j^2 }\right)\left(\frac{1}{p_m^2}-\frac{1}{p_l^2 }\right)}\tilde{\design}_{kj}\tilde{\design}_{kl}v_jv_l \\
	&=\left(\sum_{j=1}^{d}\sqrt{\left(\frac{1}{p_m^2}-\frac{1}{p_j^2}\right)}\tilde{\design}_{kj}v_j\right)^2 \geq 0.
	\end{align*}
	Step (iii) uses $\left(\frac{1}{p_m^2}-\frac{1}{p_j p_l}\right)\geq \sqrt{\left(\frac{1}{p_m^2}-\frac{1}{p_j^2 }\right)\left(\frac{1}{p_m^2}-\frac{1}{p_l^2 }\right)}.$ Indeed, \begin{align*}
	\left(\frac{1}{p_m^2}-\frac{1}{p_j p_l}\right)^2\geq \left(\frac{1}{p_m^2}-\frac{1}{p_j^2 }\right)\left(\frac{1}{p_m^2}-\frac{1}{p_l^2 }\right) &\Leftrightarrow \left(\frac{1}{p_m^4}-2\frac{1}{p_j p_l}\frac{1}{p_m^2}+\frac{1}{p_j^2 p_l^2}\right)-\frac{1}{p_m^4}+\frac{1}{p_m^2p_l^2}+\frac{1}{p_m^2p_j^2}-\frac{1}{p_j^2p_l^2}\geq 0 \\
	&\Leftrightarrow \left(\frac{1}{p_mp_j}-\frac{1}{p_mp_l}\right)^2\geq0.
	\end{align*}
    
	Let us now prove that
	$$\frac{1}{p_m^2}\tilde{\design}_{k:}\tilde{\design}_{k:}^T\preccurlyeq\frac{1}{p_m^2}\design_{k:}\design_{k:}^T$$
    i.e.\
	\begin{equation}\label{eq:proofboundT1}
	\tilde{\design}_{k:}\tilde{\design}_{k:}^T\preccurlyeq\design_{k:}\design_{k:}^T.
	\end{equation}
	Indeed, for all vector $v\neq 0$, $v^T(\design_{k:}\design_{k:}^T-\tilde{\design}_{k:}\tilde{\design}_{k:}^T)v\geq 0$:
	\begin{align*}
	v^T(\design_{k:}\design_{k:}^T-\tilde{\design}_{k:}\tilde{\design}_{k:}^T)v&=\sum_{j=1}^{\dimvar}(1-\delta_{kj}^2)\design_{kj}^2v_j^2+2\sum_{1\leq j<l\leq \dimvar}(1-\delta_{kj}\delta_{kl})\design_{kj}\design_{kl}v_jv_l \\
	&\overset{(iv)}{\geq} \sum_{j=1}^{\dimvar}(1-\delta_{kj}^2)\design_{kj}^2v_j^2+2\sum_{1\leq j<l\leq \dimvar}\sqrt{(1-\delta_{kj}^2)(1-\delta_{kl}^2)}\design_{kj}\design_{kl}v_jv_l\\ &=\left(\sum_{j=1}^{\dimvar}\sqrt{(1-\delta_{kj}^2}\design_{kj}v_j\right)^2 \geq 0
	\end{align*}
	Step (iv) is obtained using $(1-\delta_{kj}\delta_{kl})\geq \sqrt{(1-\delta_{kj}^2)(1-\delta_{kl}^2)}$. Indeed, 
	\begin{align*}
	(1-\delta_{kj}\delta_{kl})^2\geq (1-\delta_{kj}^2)(1-\delta_{kl}^2) &\Leftrightarrow (1-2\delta_{kl}\delta_{kj}+\delta_{kj}^2\delta_{kl}^2)-1+\delta_{kj}^2-\delta_{kj}^2\delta_{kl}^2+\delta_{kl}^2\geq 0 \\
	&\Leftrightarrow (\delta_{kj}-\delta_{kl})^2\geq0.
	\end{align*}

	Then, by \eqref{eq:proofT1exp} and $(\design_{k:}^T\coeff^\star-\pred_k)^2=\epsilon_k^2$, 
	\begin{align*}
	\mathbb{E}_{(X_{k:},y_k)}\left[T_1\right]&=\mathbb{E}_{(X_{k:},y_k)}\left[\frac{1}{p_m^2}\epsilon_k^2\design_{k:}\design_{k:}^T\right]+\mathbb{E}_{(X_{k:},y_k)}\left[\frac{1-p_m}{p_m^3}\left(\coeff^{\star T}\textrm{diag}(\design_{k:}\design_{k:}^T)\coeff^{\star}\right)\design_{k:}\design_{k:}^T\right]. \\
	\end{align*}
	Noting that
	\begin{equation}
	\label{eq:proofhessian_T1}
	\|\mathrm{diag}(\design_{k:})\coeff^{\star}\|^2\leq \|\design_{k:}\|^2\|\coeff^{\star}\|^2,
	\end{equation}
	
	\begin{equation}\label{eq:boundT1}
	\mathbb{E}\left[T_1\right]\preccurlyeq\frac{1}{ p_m^2}\mathrm{Var}(\epsilon_k)H+\frac{1-p_m}{ p_m^3}\|X_{k:}\|^2\|\coeff^\star\|^2H
	\end{equation}
	
	\paragraph{Bound on $T_3$.}
	
	Using the resulting matrix structure of $(I-P)P^{-2}\mathrm{diag}(\tilde{\design}_{k:}\tilde{\design}_{k:}^T)\coeff^\star \coeff^{\star T} \mathrm{diag}(\tilde{\design}_{k:}\tilde{\design}_{k:}^T)P^{-2}(I-P)$,
	\begin{equation*}
	\begin{pmatrix}
	\left(\coeff^{\star}_{1}\right)^2\delta_{k1}^4\design_{k1}^4 & 	\coeff^{\star}_{1}\coeff^{\star}_{2}\delta_{k1}^2\delta_{k2}^2\design_{k1}^2\design_{k2}^2 & \\
	& \ddots & \\
	& & \left(\coeff^{\star}_{\dimvar}\right)^2\delta_{k\dimvar}^4\design_{k\dimvar}^4
	\end{pmatrix},
	\end{equation*}
	one obtains
	\begin{multline}\label{eq:proofgeneralT3}
	\mathbb{E}_{D_{k:}}\left[T_3\right]
	=\underbrace{(I-P)P^{-2}P\mathrm{diag}({\design_{k:}}{\design_{k:}}^T)\coeff^\star \coeff^{\star T} \mathrm{diag}({\design_{k:}}{\design_{k:}}^T)PP^{-2}(I-P)}_{=:T_{3a}}
	\\
	+\underbrace{(I-P)P^{-2}(P-P^2)\mathrm{diag}({\design_{k:}}{\design_{k:}}^T)\mathrm{diag}(\coeff^\star \coeff^{\star T}) \mathrm{diag}({\design_{k:}}{\design_{k:}}^T)P^{-2}(I-P)}_{=:T_{3b}}.
	\end{multline}

	Using similar arguments as in \eqref{eq:proofboundP}, both terms in \eqref{eq:proofgeneralT3} are bounded as follows
	\begin{align*}
	T_{3a} &\preccurlyeq \frac{(1-p_m)^2}{p_m^2}\mathrm{diag}({\design_{k:}}{\design_{k:}}^T)\coeff^\star \coeff^{\star T} \mathrm{diag}({\design_{k:}}{\design_{k:}}^T) \\
	T_{3b} &\preccurlyeq\frac{(1-p_m)^3}{p_m^3}\mathrm{diag}({\design_{k:}}{\design_{k:}}^T)\mathrm{diag}(\coeff^\star \coeff^{\star T}) \mathrm{diag}({\design_{k:}}{\design_{k:}}^T)
	\end{align*}

	For $T_{3a}$, one can go further by using
	\begin{equation}\label{eq:prooft31t21prime}
	\mathrm{diag}(\design_{k:}\design_{k:}^T)\coeff^\star \coeff^{\star T}\mathrm{diag}(\design_{k:}\design_{k:}^T) \preccurlyeq \|\mathrm{diag}(\design_{k:})\coeff^{\star}\|^2\design_{k:}\design_{k:}^T.
	\end{equation}
	Let us prove that for all vector $v\neq 0$, $$v^T(\|\mathrm{diag}(\design_{k:})\coeff^{\star}\|^2\design_{k:}\design_{k:}^T-\mathrm{diag}(\design_{k:}\design_{k:}^T)\coeff^\star \coeff^{\star T}\mathrm{diag}(\design_{k:}\design_{k:}^T) )v\geq0, \: \textrm{i.e.}$$
	$$\underbrace{\sum_{j=1}^{d}\left(\left(\sum_{l=1}^{d}\design_{il}^2\coeff_{\star l}^2\right)\design_{kj}^2-\design_{kj}^4\coeff_{ j}^{\star 2}\right)v_j^2+2\sum_{1\leq j<m\leq \dimvar}\left(\left(\sum_{l=1}^{d}\design_{kl}^2\coeff_{ l}^{\star 2}\right)\design_{kj}\design_{km}-\coeff_{j}^\star\coeff_{ m}^\star\design_{kj}^2\design_{km}^2\right)v_mv_j}_{=:Q}\geq0$$
	
	Indeed, $Q\geq\left(\sum_{j=1}^{d}\sqrt{\left(\sum_{l=1}^{d}\design_{kl}^2\coeff^{\star 2}_ l\right)\design_{kj}^2-\design_{kj}^4\coeff_{j}^{\star 2}}v_j\right)^2\geq0$, since, looking at the term depending only on $v_j v_m$:
	
\begin{footnotesize}
	\begin{align*}
	&\left(\left(\sum_{l=1}^{d}\design_{kl}^2\coeff_{ l}^{\star 2}\right)\design_{kj}\design_{km}-\coeff_{j}^\star\coeff_{m}^\star\design_{kj}^2\design_{km}^2\right) \geq \sqrt{\left(\left(\sum_{l=1}^{d}\design_{kl}^2\coeff_{l}^{\star 2}\right)\design_{kj}^2-\design_{kj}^4\coeff_{j}^{\star 2}\right)\left(\left(\sum_{l=1}^{d}\design_{kl}^2\coeff_{ l}^{\star 2}\right)\design_{km}^2-\design_{km}^4\coeff_{m}^{\star 2}\right)} \\
	&\Leftrightarrow \left(\sum_{l=1}^{d}\design_{kl}^2\coeff_{ l}^{\star 2}\right)\design_{kj}^4\design_{km}^2\coeff_{j}^{\star 2} +\left(\sum_{l=1}^{d}\design_{kl}^2\coeff_{ l}^{\star 2}\right)\design_{km}^4\design_{kj}^2\coeff_{ m}^{\star 2}-2\left(\sum_{l=1}^{d}\design_{kl}^2\coeff_{ l}^{\star 2}\right)\design_{kj}^3\design_{km}^3\coeff_{j}^\star\coeff_{m}^\star\geq0 \\
	&\Leftrightarrow \left(\sqrt{\left(\sum_{l=1}^{d}\design_{kl}^2\coeff_{l}^{\star 2}\right)}\design_{kj}^2\design_{km}\coeff_{j}^\star - \sqrt{\left(\sum_{l=1}^{d}\design_{kl}^2\coeff_{ l}^{\star 2}\right)}\design_{km}^2\design_{kj}\coeff_{m}^\star\right)^2\geq 0
	\end{align*}
\end{footnotesize}
	
	For $T_{3b}$, one can also dig deeper noting that
	\begin{equation}\label{eq:prooft32t22prime}
	\mathrm{diag}(\design_{k:}\design_{k:}^T)\mathrm{diag}(\coeff^\star \coeff^{\star T})\mathrm{diag}(\design_{k:}\design_{k:}^T) \preccurlyeq  \|\mathrm{diag}(\design_{k:})\coeff^{\star}\|^2\design_{k:}\design_{k:}^T.
	\end{equation}
	
	For all vector $v\neq 0$, we aim at proving
	\begin{align*}
	&v^T(\|\coeff^{\star T}\mathrm{diag}(X_{k:})\|^2\design_{k:}\design_{k:}^T-\mathrm{diag}(\design_{k:}\design_{k:}^T)\mathrm{diag}(\coeff^\star \coeff^{\star T})  \mathrm{diag}(\design_{k:}\design_{k:}^T)))v\geq 0 \\
	&\Leftrightarrow \underbrace{\sum_{j=1}^{d}\left(\left(\sum_{l=1}^{d}\design_{kl}^2\coeff_{l}^{\star 2} \right)\design_{kj}^2-\design_{kj}^4\coeff_{ j}^{\star 2}\right)v_j^2+2\sum_{1\leq j<m\leq \dimvar}\left(\sum_{l=1}^{d}\design_{kl}^2\coeff_{ l}^{\star 2}\right)\design_{kj}\design_{km}v_jv_m }_{=: Q'} \geq 0.
	\end{align*}
	Indeed, $Q'\geq\left(\sum_{j=1}^{d}\sqrt{\left(\sum_{l=1}^{d}\design_{kl}^2\coeff_{ l}^{\star 2}\right)\design_{kj}^2-\design_{kj}^4\coeff_{ j}^{\star 2}}v_j\right)^2\geq0$ since
	
	\begin{align*}
	&\left(\left(\sum_{l=1}^{d}\design_{kl}^2\coeff_{ l}^{\star 2}\right)\design_{kj}\design_{km}\right) \geq \sqrt{\left(\left(\sum_{l=1}^{d}\design_{kl}^2\coeff_{l}^{\coeff 2}\right)\design_{kj}^2-\design_{kj}^4\coeff_{ j}^{\star 2}\right)\left(\left(\sum_{l=1}^{d}\design_{kl}^2\coeff_{ l}^{\star 2}\right)\design_{km}^2-\design_{km}^4\coeff_{m}^{\star 2}\right)}\\
	&\Leftrightarrow \left(\sum_{l=1}^{d}\design_{kl}^2\coeff_{ l}^{\star 2}\right)\design_{kj}^4\design_{km}^2\coeff_{j}^{\star 2} +\left(\sum_{l=1}^{d}\design_{kl}^2\coeff_{ l}^{\star 2}\right)\design_{km}^4\design_{kj}^2\coeff_{ m}^{\star 2}-\design_{kj}^4\design_{km}^4\coeff_{j}^{\star 2}\coeff_{m}^{\star 2}\geq 0 
	\end{align*}
	
	Combining \eqref{eq:proofhessian_T1},  \eqref{eq:prooft31t21prime} and \eqref{eq:prooft32t22prime} lead to 
	\begin{align*}
	\mathbb{E}_{(X_{k:},y_k)}\left[T_{3a}\right]&\preccurlyeq \frac{(1-p_m)^2}{p_m^2}\|X_{k:}\|^2\|\coeff^\star\|^2H \\
	\mathbb{E}_{(X_{k:},y_k)}\left[T_{3b}\right]&\preccurlyeq\frac{(1-p_m)^3}{p_m^3}\|X_{k:}\|^2\|\coeff^\star\|^2H 
	\end{align*}
	and to the final bound for $T_3$,
	\begin{equation}\label{eq:boundT3}
	    \mathbb{E}\left[T_3\right] \preccurlyeq \frac{(1-p_m)^2}{p_m^2}\|X_{k:}\|^2\|\coeff^\star\|^2H+\frac{(1-p_m)^3}{p_m^3}\|X_{k:}\|^2\|\coeff^\star\|^2H.
	\end{equation}
	
	\paragraph{Bound on $T_{2}+T_{2}^T$.}
    Firstly, focus on $T_2$: 

	\begin{align*}
	T_{2}&=-(\tilde{\design}_{k:}^TP^{-1}\coeff^\star-\pred_k)P^{-1}\tilde{\design}_{k:}\coeff^{\star T}\mathrm{diag}(\tilde{\design}_{k:}\tilde{\design}_{k:}^T)P^{-2}(I-P) \\
	&=:-(A-B),
	\end{align*}
	where  \begin{align*}
	A&=P^{-1}\tilde{\design}_{k:}\tilde{\design}_{k:}^TP^{-1}\coeff^\star\coeff^{\star T}\mathrm{diag}(\tilde{\design}_{k:}\tilde{\design}_{k:}^T)P^{-2}(I-P) \\
	B&=P^{-1}\tilde{\design}_{k:}\pred_k\coeff^{\star T}\mathrm{diag}(\tilde{\design}_{k:}\tilde{\design}_{k:}^T)P^{-2}(I-P).
	\end{align*}
	
	\subparagraph{Computation w.r.t.\ $\mathbb{E}_{D_{k:}}$.}
	
	Term $A$ can be split into three terms, 
	\begin{align*}
	A_1&=P^{-1}\mathrm{diag}(\tilde{\design}_{k:}\tilde{\design}_{k:}^T)P^{-1}\coeff^\star\coeff^{\star T}\mathrm{diag}(\tilde{\design}_{k:}\tilde{\design}_{k:}^T)P^{-2}(I-P) \\
	A_2&=P^{-1}(\tilde{\design}_{k:}\tilde{\design}_{k:}^T-\mathrm{diag}(\tilde{\design}_{k:}\tilde{\design}_{k:}^T))P^{-1}\mathrm{diag}(\coeff^\star\coeff^{\star T})\mathrm{diag}(\tilde{\design}_{k:}\tilde{\design}_{k:}^T)P^{-2}(I-P) \\
	A_3&=P^{-1}(\tilde{\design}_{k:}\tilde{\design}_{k:}^T-\mathrm{diag}(\tilde{\design}_{k:}\tilde{\design}_{k:}^T))P^{-1}(\coeff^\star\coeff^{\star T}-\mathrm{diag}(\coeff^\star\coeff^{\star T}))\mathrm{diag}(\tilde{\design}_{k:}\tilde{\design}_{k:}^T)P^{-2}(I-P).
	\end{align*}
	Noting that $$A_1=P^{-2}\mathrm{diag}(\tilde{\design}_{k:}\tilde{\design}_{k:}^T)\coeff^\star\coeff^{\star T}\mathrm{diag}(\tilde{\design}_{k:}\tilde{\design}_{k:}^T)P^{-2}(I-P),$$ the expectation $\mathbb{E}_{D_{k:}}$ has already been computed in \eqref{eq:proofgeneralT3}, so
	\begin{multline}\label{eq:proofboundA1}
	\mathbb{E}_{D_{k:}}\left[A_1\right]=P^{-2}P\mathrm{diag}({\design_{k:}}{\design_{k:}}^T)\coeff^\star \coeff^{\star T} \mathrm{diag}({\design_{k:}}{\design_{k:}}^T)PP^{-2}(I-P) \\
	+P^{-2}(P-P^2)\mathrm{diag}({\design_{k:}}{\design_{k:}}^T)\mathrm{diag}(\coeff^\star \coeff^{\star T}) \mathrm{diag}({\design_{k:}}{\design_{k:}}^T)P^{-2}(I-P).
	\end{multline}
	As for $A_2$, making the structure of the term $(\tilde{\design}_{k:}\tilde{\design}_{k:}^T-\mathrm{diag}(\tilde{\design}_{k:}\tilde{\design}_{k:}^T))P^{-1}\mathrm{diag}(\coeff^\star\coeff^{\star T})\mathrm{diag}(\tilde{\design}_{k:}\tilde{\design}_{k:}^T)$ explicit,
	$$A_2 = \begin{pmatrix}
	0 & \frac{1}{p_2}\delta_{k1}\delta_{k2}^3\design_{k1}\design_{k2}^3\coeff_2^{\star 2} & \dots & \frac{1}{p_d}\delta_{k1}\delta_{kd}^3\design_{k1}\design_{kd}^3\coeff_d^{\star2} \\
	\frac{1}{p_1}\delta_{k2}\delta_{k1}^3\design_{k2}\design_{k1}^3\coeff_1^{\star2} & 0 & & \\
	& & \ddots & \\
	\frac{1}{p_1}\delta_{kd}\delta_{k1}^3\design_{kd}\design_{k1}^3\coeff_1^{\star 2} & & & 0
	\end{pmatrix},$$
	one has 
	\begin{equation}\label{eq:proofA2bound}
	\mathbb{E}_{D_{k:}}[A_2]=P^{-1}P({\design_{k:}}{\design_{k:}}^T-\mathrm{diag}({\design_{k:}}{\design_{k:}}^T))\mathrm{diag}(\coeff^\star \coeff^{\star T})\mathrm{diag}({\design_{k:}}{\design_{k:}}^T)P^{-2}(I-P).
	\end{equation}
	
	As for $A_3$, the term $(\tilde{\design}_{k:}\tilde{\design}_{k:}^T-\mathrm{diag}(\tilde{\design}_{k:}\tilde{\design}_{k:}^T))P^{-1}(\coeff^\star\coeff^{\star T}-\mathrm{diag}(\coeff^\star\coeff^{\star T}))\mathrm{diag}(\tilde{\design}_{k:}\tilde{\design}_{k:}^T)$ can be made explicit as
	$$\begin{pmatrix}
	\sum_{l=2}^{d}\frac{1}{p_l}\delta_{kl}\design_{kl}\coeff_{l}^\star\delta_{k1}\design_{k1}^3\coeff_1^\star & \sum_{l=3}^{d}\frac{1}{p_l}\delta_{kl}\design_{kl}\coeff_{l}^\star\delta_{k1}\delta_{k2}^2\design_{k1}\design_{k2}^2\coeff_2^\star & \dots & \sum_{l\neq1,d}\frac{1}{p_l}\delta_{kl}\design_{kl}\coeff_{l}^\star\delta_{k1}\delta_{kd}^2\design_{k1}\design_{kd}^2\coeff_d^\star \\
	& \ddots & & \\
	& & \ddots & \\
	& & & \sum_{l=1}^{d-1}\frac{1}{p_l}\delta_{kl}\design_{kl}\coeff_{l}^\star\delta_{kd}\design_{kd}^3\coeff_d^\star
	\end{pmatrix},$$
	which gives
	\begin{multline*}
	\mathbb{E}_{D_{k:}}\left[A_3\right]=P^{-1}P(\tilde{\design}_{k:}\tilde{\design}_{k:}^T-\mathrm{diag}(\tilde{\design}_{k:}\tilde{\design}_{k:}^T))(\coeff^\star\coeff^{\star T}-\mathrm{diag}(\coeff^\star\coeff^{\star T}))\mathrm{diag}(\tilde{\design}_{k:}\tilde{\design}_{k:}^T)PP^{-2}(I-P) \\
	+P^{-1}(P-P^2)\mathrm{diag}\left((\tilde{\design}_{k:}\tilde{\design}_{k:}^T-\mathrm{diag}(\tilde{\design}_{k:}\tilde{\design}_{k:}^T))(\coeff^\star\coeff^{\star T}-\mathrm{diag}(\coeff^\star\coeff^{\star T}))\mathrm{diag}(\tilde{\design}_{k:}\tilde{\design}_{k:}^T)\right)P^{-2}(I-P).
	\end{multline*}
	
	Noting the following,
	\begin{multline*}
	\mathrm{diag}\left((\tilde{\design}_{k:}\tilde{\design}_{k:}^T-\mathrm{diag}(\tilde{\design}_{k:}\tilde{\design}_{k:}^T))(\coeff^\star\coeff^{\star T}-\mathrm{diag}(\coeff^\star\coeff^{\star T}))\mathrm{diag}(\tilde{\design}_{k:}\tilde{\design}_{k:}^T)\right)=\mathrm{diag}\left(\tilde{\design}_{k:}\tilde{\design}_{k:}^T\coeff^\star\coeff^{\star T}\mathrm{diag}(\tilde{\design}_{k:}\tilde{\design}_{k:}^T)\right)\\
	-\mathrm{diag}(\tilde{\design}_{k:}\tilde{\design}_{k:}^T)\mathrm{diag}(\coeff^\star\coeff^{\star T})\mathrm{diag}(\tilde{\design}_{k:}\tilde{\design}_{k:}^T), 
	\end{multline*}
	one has 
    \begin{multline}\label{eq:proofA3bound}
	\mathbb{E}_{D_{k:}}\left[A_3\right]=P^{-1}P(\tilde{\design}_{k:}\tilde{\design}_{k:}^T-\mathrm{diag}(\tilde{\design}_{k:}\tilde{\design}_{k:}^T))(\coeff^\star\coeff^{\star T}-\mathrm{diag}(\coeff^\star\coeff^{\star T}))\mathrm{diag}(\tilde{\design}_{k:}\tilde{\design}_{k:}^T)PP^{-2}(I-P) \\
	+P^{-1}(P-P^2)\mathrm{diag}\left(\tilde{\design}_{k:}\tilde{\design}_{k:}^T\coeff^\star\coeff^{\star T}\mathrm{diag}(\tilde{\design}_{k:}\tilde{\design}_{k:}^T)\right)P^{-2}(I-P) \\ 
	-P^{-1}(P-P^2)\mathrm{diag}(\tilde{\design}_{k:}\tilde{\design}_{k:}^T)\mathrm{diag}(\coeff^\star\coeff^{\star T})\mathrm{diag}(\tilde{\design}_{k:}\tilde{\design}_{k:}^T)P^{-2}(I-P)
	\end{multline}
	
	Term $B$ can be made explicit as follows
	\begin{equation*}
	\missdesignzero_{k:}\coeff^{\star T}\mathrm{diag}(\tilde{\design}_{k:}\tilde{\design}_{k:}^T)=\begin{pmatrix}
	\coeff_{1}^\star\delta_{i1}^3\design_{i1}^3 & \coeff_{1}^\star\delta_{i1}^2\delta_{i2}\design_{i1}^2\design_{i2} & \\
	\coeff_{2}^\star\delta_{i2}^2\design_{i2}^2 \delta_{i1}\design_{i1} & \coeff_{2}^\star\delta_{i2}^3\design_{i2}^3 & \\
	& & \ddots
	\end{pmatrix}
	\end{equation*}
	which implies
	\begin{multline}\label{eq:proofBbound}
	\mathbb{E}_{D_{k:}}\left[B\right]=\pred_k P^{-1}P\design_{k:}\coeff^{\star T}\mathrm{diag}({\design_{k:}}{\design_{k:}}^T)PP^{-2}(I-P)
	\\
	+\pred_kP^{-1}(P-P^2)\mathrm{diag}(\design_{k:}\coeff^{\star T}\mathrm{diag}({\design_{k:}}{\design_{k:}}^T))P^{-2}(I-P).
	\end{multline}
	
	Putting Equations \eqref{eq:proofboundA1}, \eqref{eq:proofA2bound}, \eqref{eq:proofA3bound} and \eqref{eq:proofBbound} together,  
	$$\mathbb{E}\left[T_{2}+T_{2}^T\right]=\mathbb{E}_{(X_{k:},y_{k})}\left[T_{21}+T_{22}+T_{23}+T_{23}^T+T_{24}+T_{24}^T+T_{25}\right]$$
	\begin{align*}
	T_{21}&=-2(P^{-1}-I)\mathrm{diag}({\design_{k:}}{\design_{k:}}^T)\coeff^\star\coeff^{\star T}\mathrm{diag}({\design_{k:}}{\design_{k:}}^T)(P^{-1}-I)\\
	T_{22}&=-2P^{-3}((I-P)(I-3P+2P^2)\mathrm{diag}({\design_{k:}}{\design_{k:}}^T)\mathrm{diag}(\coeff^\star\coeff^{\star T})\mathrm{diag}({\design_{k:}}{\design_{k:}}^T) \\
	T_{23}&=-{\design_{k:}}{\design_{k:}}^T\mathrm{diag}(\coeff^\star\coeff^{\star T})\mathrm{diag}({\design_{k:}}{\design_{k:}}^T)(P^{-2}(I-P)-P^{-1}(I-P))\\
	T_{24}&=-({\design_{k:}}^T\coeff^\star-\pred_k){\design_{k:}}\coeff^{\star T}\mathrm{diag}({\design_{k:}}{\design_{k:}}^T)P^{-1}(I-P) \\
	T_{25}&=-2({\design_{k:}}^T\coeff^\star-\pred_k)(I-P)\textrm{diag}({\design_{k:}}\coeff^{\star T}\mathrm{diag}({\design_{k:}}{\design_{k:}}^T))P^{-2}(I-P),
	\end{align*}
	
	\subparagraph{Computation w.r.t.\ $\mathbb{E}_{(X_{k:},y_{k})}$.}
	For $T_{21}$, it trivially holds that
	\begin{equation}\label{eq:proofTprime21}
	-\mathrm{diag}(\design_{k:}\design_{k:}^T)\coeff^\star \coeff^{\star T}\mathrm{diag}(\design_{k:}\design_{k:}^T) \preccurlyeq 0.
	\end{equation}
	Indeed, for all vector $v \neq 0$,
	$$\sum_{j=1}^{d} \design_{kj}^4\coeff_j^{\star 2}v_j^2+2\sum_{1\leq j<m\leq \dimvar}\coeff_j^\star\coeff_m^\star\design_{kj}^2\design_{km}^2v_jv_m=\left(\sum_{j=1}^{d}\design_{kj}^2\coeff_j^\star v_j\right)^2\geq 0.$$
	Denoting the maximum of the coefficients of $P$ as $p_M=\max_j p_j$, one has
	\begin{align*}
	T_{21}&\preccurlyeq
	-2\frac{(1-p_M)^2}{p_m^2}\mathrm{diag}({\design_{k:}}{\design_{k:}}^T)\coeff^\star\coeff^{\star T}\mathrm{diag}({\design_{k:}}{\design_{k:}}^T) &\\
	&\preccurlyeq 0 &\textrm{(using \eqref{eq:proofTprime21})}.
	\end{align*}
	
	$T_{22}$ is split into two terms, 
	\begin{align*}
	    T_{22a}&=-2P^{-3}((I-P)(I+2P^2))\mathrm{diag}({\design_{k:}}{\design_{k:}}^T)\mathrm{diag}(\coeff^\star\coeff^{\star T})\mathrm{diag}({\design_{k:}}{\design_{k:}}^T) \\
	    T_{22b}&=6P^{-2}(I-P)\mathrm{diag}({\design_{k:}}{\design_{k:}}^T)\mathrm{diag}(\coeff^\star\coeff^{\star T})\mathrm{diag}({\design_{k:}}{\design_{k:}}^T)
	\end{align*}
	$$T_{22a}\preccurlyeq -2\frac{(1-p_M)(1+2p_M^2)}{p_m^{3}}\mathrm{diag}({\design_{k:}}{\design_{k:}}^T)\mathrm{diag}(\coeff^\star\coeff^{\star T})\mathrm{diag}({\design_{k:}}{\design_{k:}}^T)
	\preccurlyeq 0,$$
	since it is a diagonal matrix with only negative coefficients, and noting that  $\frac{(1-p_M)(1+2p_M^2)}{p_m^{3}}>0$. 
	Then, 
	$$T_{22b} \preccurlyeq \frac{6(1-p_m)}{p_m^2}\mathrm{diag}({\design_{k:}}{\design_{k:}}^T)\mathrm{diag}(\coeff^\star\coeff^{\star T})\mathrm{diag}({\design_{k:}}{\design_{k:}}^T)$$
	which implies
	$$\mathbb{E}_{(X_{k:},y_{k})}\left[T_{22b}\right]\preccurlyeq \frac{6(1-p_m)}{p_m^2}\|\design_{k:}\|^2\|\coeff^{\star}\|^2H$$
	using \eqref{eq:prooft31t21prime} and \eqref{eq:proofhessian_T1}.
	
	As for $T_{23}+T_{23}^T$, note that
	$$T_{23}+T_{23}^T \preccurlyeq -2\frac{(p_M-1)^2}{p_m^2}\left({\design_{k:}}{\design_{k:}}^T\mathrm{diag}(\coeff^\star\coeff^{\star T})\mathrm{diag}({\design_{k:}}{\design_{k:}}^T)+\mathrm{diag}(\coeff^\star\coeff^{\star T})\mathrm{diag}({\design_{k:}}{\design_{k:}}^T){\design_{k:}}{\design_{k:}}^T\right) $$
	One prove that 
	\begin{multline}\label{eq:proofT23prime}
	-\left({\design_{k:}}{\design_{k:}}^T\mathrm{diag}(\coeff^\star\coeff^{\star T})\mathrm{diag}({\design_{k:}}{\design_{k:}}^T)+\mathrm{diag}({\design_{k:}}{\design_{k:}}^T)\mathrm{diag}(\coeff^\star\coeff^{\star T}){\design_{k:}}{\design_{k:}}^T\right) \\
	\preccurlyeq -2\left(\min_{j=1,\dots,\dimvar} \beta_{j}^{\star 2}\design_{kj}^2\right) \design_{k:}\design_{k:}^T
	\end{multline}
	
	Indeed, denoting $m=\left(\min_{j=1,\dots,\dimvar} \beta_{j}^{\star 2}\design_{kj}^2\right)$, one has
	
	\begin{align*}
	&v^T\left(-2m\design_{k:}\design_{k:}^T+\left({\design_{k:}}{\design_{k:}}^T\mathrm{diag}(\coeff^\star\coeff^{\star T})\mathrm{diag}({\design_{k:}}{\design_{k:}}^T)+\mathrm{diag}({\design_{k:}}{\design_{k:}}^T)\mathrm{diag}(\coeff^\star\coeff^{\star T}){\design_{k:}}{\design_{k:}}^T\right)\right)v\geq 0  \\
	\Leftrightarrow&\sum_{j=1}^\dimvar \left(-2m\design_{kj}^2+2\design_{kj}^4\coeff_{j}^{\star 2}\right)v_j^2+2\sum_{1\leq j<q\leq \dimvar} \left(-2m\design_{kj}\design_{kq}+\design_{kj}^3\design_{kq}\beta_{j}^{\star 2}+\design_{kq}^3\design_{kj}\beta_{q}^{\star 2}\right)v_jv_q \geq 0 \\
	\Leftrightarrow&\sum_{j=1}^\dimvar \left(-2m\design_{kj}^2+2\design_{kj}^4\coeff_{j}^{\star 2}\right)v_j^2+2\sum_{1\leq j<q\leq \dimvar} \sqrt{\left(-2m\design_{kj}^2+2\design_{kj}^4\coeff_{ j}^{\star 2}\right)\left(-2m\design_{kq}^2+2\design_{kq}^4\coeff_{q}^{\star 2}\right)}v_jv_q \geq 0 \\
	\Leftrightarrow& \left(\sum_{j=1}^\dimvar \sqrt{\left(-2m\design_{kj}^2+2\design_{kj}^4\coeff_{j}^{\star 2}\right)}v_j\right)^2 \geq 0,
	\end{align*}
	
	using that 
	\begin{align*}
	&\left(-2m\design_{kj}^2+2\design_{kj}^4\coeff_{j}^{\star 2}\right)\left(-2m\design_{kq}^2+2\design_{kq}^4\coeff_{q}^{\star 2}\right) \geq \left(-2m\design_{kj}\design_{kq}+\design_{kj}^3\design_{kq}\beta_{j}^{\star 2}+\design_{kq}^3\design_{kj}\beta_{q}^{\star 2}\right)^2 \\
	\Leftrightarrow& \left(\design_{kj}^3\design_{kq}\beta_{ j}^{\star 2}-\design_{kq}^3\design_{kj}\beta_{q}^{\star 2}\right)^2 \geq 0
	\end{align*}

	Therefore 
	$$\mathbb{E}_{(X_{k:},y_k)}\left[T_{23}+T_{23}^T\right]\preccurlyeq -2\frac{(p_M-1)^2}{p_m^2}\left(\min_{j=1,\dots,\dimvar} \beta_{j}^{\star 2}\design_{kj}^2\right) H \preccurlyeq 0, $$
	since $H$ is definite positive.

	Finally one uses $(X_{k:}^T\coeff^{\star}-\pred_k)=\epsilon_k$ to conclude by independence that
	$T_{24}=T_{25}=0$.
	
	One gets
	\begin{equation}\label{eq:boundT21T22}
	   \mathbb{E}\left[T_{2}+T_{2}^T\right] \preccurlyeq \frac{6(1-p_m)}{p_m^2}\|\design_{k:}\|^2\|\coeff^{\star}\|^2H.
	\end{equation}
	
	Combining \eqref{eq:boundT1}, \eqref{eq:boundT3} and \eqref{eq:boundT21T22} leads to the desired bound.

\end{proof}

\subsection{Proof of~\cref{lem:covtheo}}\label{appsec:lem:covtheo-app}
\begin{lemma}
	\label{sup-ass:covtheo-app}  For all $k \geq 0$, given the binary mask $D$, the adjusted gradient $\tilde{g}_k(\coeff)$ is a.s. $L_{k,D}$-Lipschitz continuous, i.e.\ for all $u, v \in \mathbb {R}^d$,
	\begin{equation*}
	\|\tilde{g}_{k}(u)-\tilde{g}_{k}(v)\|\leq L_{k,D}\|u-v\|~\textrm{a.s.}.
	\end{equation*}
	Set
	\begin{equation}
	\label{eq:smooth_const}
	L:=\sup_{k,D} L_{k,D} \leq \frac{1}{p_m^2} \max_k \|\design_{k:}\|^2~\textrm{a.s.}.
	\end{equation}
	In addition, for all $k \geq 0$, $\tilde{g}_{k}(\beta)$ is almost surely co-coercive. 
\end{lemma}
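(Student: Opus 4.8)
The crux is that, for a fixed realization of the mask $D$, the map $\coeff\mapsto\tilde g_k(\coeff)$ in \Cref{eq:gradMCARgeneral} is affine,
\begin{equation*}
\tilde g_k(\coeff) = A_{k,D}\,\coeff - P^{-1}\missdesignzero_{k:}y_k,\qquad A_{k,D} := P^{-1}\missdesignzero_{k:}\missdesignzero_{k:}^{T}P^{-1} - (\mathrm I - P)P^{-2}\mathrm{diag}(\missdesignzero_{k:}\missdesignzero_{k:}^{T}),
\end{equation*}
and $A_{k,D}$ is symmetric (a symmetric rank-one term minus a diagonal term). Hence $\tilde g_k(u)-\tilde g_k(v)=A_{k,D}(u-v)$, so $\tilde g_k$ is $L_{k,D}$-Lipschitz with $L_{k,D}=\|A_{k,D}\|_{\mathrm{op}}$, and the lemma reduces to (i) an operator-norm bound on $A_{k,D}$ and (ii) exhibiting an a.s.\ convex smooth primitive. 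I would do (i) first.

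\textbf{Operator-norm bound.} Decompose $A_{k,D}=B_1-B_2$ with $B_1:=(P^{-1}\missdesignzero_{k:})(P^{-1}\missdesignzero_{k:})^{T}\succcurlyeq 0$ of rank at most one and $B_2:=(\mathrm I-P)P^{-2}\mathrm{diag}(\missdesignzero_{k:}\missdesignzero_{k:}^{T})\succcurlyeq 0$ diagonal with nonnegative entries. Then $\|B_1\|_{\mathrm{op}}=\|P^{-1}\missdesignzero_{k:}\|^{2}=\sum_j\missdesignzero_{kj}^{2}/p_j^{2}\le p_m^{-2}\|\missdesignzero_{k:}\|^{2}\le p_m^{-2}\|\design_{k:}\|^{2}$, the last step because zero-imputation only removes coordinates so $\|\missdesignzero_{k:}\|\le\|\design_{k:}\|$. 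The largest entry of $B_2$ is $\max_j(1-p_j)p_j^{-2}\missdesignzero_{kj}^{2}\le p_m^{-2}(1-p_m)\|\design_{k:}\|^{2}$, using that $p\mapsto(1-p)/p^{2}$ decreases on $(0,1)$ and $\missdesignzero_{kj}^{2}\le\|\design_{k:}\|^{2}$. Hence $-p_m^{-2}(1-p_m)\|\design_{k:}\|^{2}\,\mathrm I\preccurlyeq -B_2\preccurlyeq A_{k,D}\preccurlyeq B_1\preccurlyeq p_m^{-2}\|\design_{k:}\|^{2}\,\mathrm I$, so $L_{k,D}=\|A_{k,D}\|_{\mathrm{op}}\le p_m^{-2}\|\design_{k:}\|^{2}$; taking the supremum over $k$ and over all masks $D$ gives \Cref{eq:smooth_const}.

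\textbf{Co-coercivity --- the main obstacle.} Since $\tilde g_k$ is affine it is a gradient field, with (up to an additive constant) the quadratic primitive
\begin{equation*}
\tilde f_k(\coeff) = \tfrac12\left(\missdesignzero_{k:}^{T}P^{-1}\coeff - y_k\right)^{2} - \tfrac12\,\coeff^{T}(\mathrm I - P)P^{-2}\mathrm{diag}(\missdesignzero_{k:}\missdesignzero_{k:}^{T})\,\coeff,
\end{equation*}
so $\nabla\tilde f_k=\tilde g_k$ and $\tilde f_k$ has Hessian $A_{k,D}$, hence is $L_{k,D}$-smooth by the previous step. By the Baillon--Haddad characterization (\citet{Zhu_Mar_1995}), $\tilde g_k$ is $1/L_{k,D}$-co-coercive as soon as $\tilde f_k$ is convex, i.e.\ as soon as $A_{k,D}\succcurlyeq 0$ almost surely. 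I expect establishing this positive-semidefiniteness to be the hardest part of the lemma: it does \emph{not} follow from the two-sided spectral bound above (which is exactly why it was omitted in \citet{ma2017stochastic}), and it forces one to use how the diagonal debiasing correction $(\mathrm I-P)P^{-2}\mathrm{diag}(\missdesignzero_{k:}\missdesignzero_{k:}^{T})$ is calibrated against the rank-one term $P^{-1}\missdesignzero_{k:}\missdesignzero_{k:}^{T}P^{-1}$. The route I would try is to restrict to the observed coordinates $\{j:\missdesignzero_{kj}\ne 0\}$ and write $v^{T}A_{k,D}v$ as a perfect square up to a nonnegative remainder, carefully tracking the weights $1/p_j$, in the spirit of the Cauchy--Schwarz-type completions used throughout the proof of \Cref{lem:noisetheo}. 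As a fallback, one can route into Theorem~1 of \citet{bach2013non} the weaker fact that, conditionally on $(\design_{k:},y_k)$, $\tilde g_k$ is an unbiased estimate of $\nabla f_k$ with $f_k(\coeff)=\tfrac12(\langle\design_{k:},\coeff\rangle-y_k)^{2}$ manifestly convex and $\|\design_{k:}\|^{2}$-smooth.
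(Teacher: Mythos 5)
Your treatment of the Lipschitz half is correct and, if anything, cleaner than the paper's: writing $A_{k,D}=B_1-B_2$ with $B_1=(P^{-1}\missdesignzero_{k:})(P^{-1}\missdesignzero_{k:})^T\succcurlyeq 0$ and $B_2=(\mathrm{I}-P)P^{-2}\mathrm{diag}(\missdesignzero_{k:}\missdesignzero_{k:}^T)\succcurlyeq 0$ diagonal, the sandwich $-B_2\preccurlyeq A_{k,D}\preccurlyeq B_1$ together with $\|B_1\|_{\mathrm{op}}\le p_m^{-2}\|\design_{k:}\|^2$ and $\|B_2\|_{\mathrm{op}}\le (1-p_m)p_m^{-2}\|\design_{k:}\|^2$ yields \eqref{eq:smooth_const} directly, whereas the paper routes through a comparison with $\frac{1}{p_m^2}\bigl(\missdesignzero_{k:}\missdesignzero_{k:}^T-(1-p_m)\mathrm{diag}(\missdesignzero_{k:}\missdesignzero_{k:}^T)\bigr)$ and then Weyl's inequality. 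Your reduction of co-coercivity to smoothness plus convexity of the quadratic primitive $\tilde f_k$, i.e.\ to $A_{k,D}\succcurlyeq 0$ a.s., is also exactly the paper's strategy.

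The problem is that you stop there: the positive-semidefiniteness of $A_{k,D}$ is the entire content of the co-coercivity claim, and you only describe a route you ``would try,'' so the proposal has a genuine gap. Moreover, the route you sketch (a perfect-square completion of $v^TA_{k,D}v$ over the observed coordinates) cannot be carried out, because $A_{k,D}$ is in general \emph{not} positive semi-definite. Take $d=2$, homogeneous $p_1=p_2=p<1$, a fully observed row $\missdesignzero_{k:}=\design_{k:}=(1,-1)^T$ and $v=(1,1)^T$: then $v^TP^{-1}\missdesignzero_{k:}\missdesignzero_{k:}^TP^{-1}v=0$ while $v^T(\mathrm{I}-P)P^{-2}\mathrm{diag}(\missdesignzero_{k:}\missdesignzero_{k:}^T)v=2(1-p)/p^2>0$, so $v^TA_{k,D}v<0$ on an event of positive probability. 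The paper's own argument for this step identifies $\sum_j p\missdesignzero_{kj}^2v_j^2+2\sum_{j<l}\missdesignzero_{kj}\missdesignzero_{kl}v_jv_l$ with $\bigl(\sum_j\sqrt{p}\,\missdesignzero_{kj}v_j\bigr)^2$, which silently multiplies the cross terms by $p$ and reverses direction exactly when $\missdesignzero_{kj}\missdesignzero_{kl}v_jv_l<0$ — the configuration above. So your instinct that this is the hard part is right, but the completion you propose fails on such sign patterns; only the \emph{expected} Hessian $\mathbb{E}_{D_{k:}}[A_{k,D}]=\design_{k:}\design_{k:}^T$ is positive semi-definite. Your fallback (unbiasedness of $\tilde g_k$ conditionally on $(\design_{k:},y_k)$) concerns that expectation and does not deliver the almost-sure co-coercivity the lemma asserts.
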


\begin{proof}
	Note that
	\begin{align*}
	\|\tilde{g}_k(u)-\tilde{g}_k(v)\|&=\Big\|\left(P^{-1}\tilde{\design}_{k:}\tilde{\design}_{k:}^TP^{-1}-(I-P)P^{-2}\mathrm{diag}(\tilde{\design}_{k:}\tilde{\design}_{k:}^T)\right)(u-v)\Big\| \\
	&\leq \Big\|\left(P^{-1}\tilde{\design}_{k:}\tilde{\design}_{k:}^TP^{-1}-(I-P)P^{-2}\mathrm{diag}(\tilde{\design}_{k:}\tilde{\design}_{k:}^T)\right)\Big\|\|u-v\| \\
	&\leq \Big\|\frac{1}{p_m^2}\left(\tilde{\design}_{k:}\tilde{\design}_{k:}^T-(1-p_m)\mathrm{diag}(\tilde{\design}_{k:}\tilde{\design}_{k:}^T)\right)\Big\|\|u-v\| \\
	&\leq \frac{1}{p_m^2}\|\tilde{\design}_{k:}\|^2\|u-v\|,
	\end{align*}
	where we have used the Weyl inequality in the last step. 
	
	One can thus choose $L_{k,D}=\frac{1}{p_m^2}\|\tilde{\design}_{k:}\|^2$ and $$L=\sup_{k,D} L_{k,D}\leq \frac{1}{p_m^2}\sup_k\|\design_{k:}\|^2 \leq \frac{1}{p_m^2} \max_k \|\design_{k:}\|^2$$
	
	Then, let us prove that the primitive of the adjusted gradient $\tilde{g}_k$ is convex. To do this, we check that the derivative of $\tilde{g}_k$ is definite positive:
	$$\frac{\partial}{\partial\coeff}\tilde{g}_k(\coeff)=\frac{1}{p^2}\left(\missdesignzero_{k:}\missdesignzero_{k:}^{T}-{(1-p)}\textrm{diag}\left(\missdesignzero_{k:}\missdesignzero_{k:}^{T}\right)\right)$$
	since $\left(\missdesignzero_{k:}\missdesignzero_{k:}^T-(1-p)\textrm{diag}\left(\missdesignzero_{k:}\missdesignzero_{k:}^{T}\right)\right)$ is positive semi-definite. Indeed, 
	\begin{align*}
	&v^T\left(\missdesignzero_{k:}\missdesignzero_{k:}^T-(1-p)\textrm{diag}\left(\missdesignzero_{k:}\missdesignzero_{k:}^{T}\right)\right)v\geq 0 \\
	&\Leftrightarrow \sum_{j=1}^\dimvar p\tilde{\design_{kj}}^2v_j^2+2\sum_{1\leq j<l\leq \dimvar}\missdesignzero_{kj}\missdesignzero_{kl}v_jv_l \geq 0 \\
	&\Leftrightarrow\left(\sum_{j=1}^\dimvar\sqrt{p}\missdesignzero_{kj}v_j\right)^2\geq 0,
	\end{align*}
	using $p^2\left(\missdesignzero_{kj}\right)^2\left(\missdesignzero_{kj}\right)^2\leq \left(\missdesignzero_{kj}\right)^2\left(\missdesignzero_{kl}\right)^2$ since $p\leq1$.
\end{proof}

\section{Add-on to \Cref{sec:exp}: Lipschitz constant computation}
\label{appsec:Additional-figures}

The Lipschitz constant $L$ given in \eqref{eq:smooth_const} is either computed from the complete covariates (oracle estimate)
$\hat{L}_n^\textrm{OR}=\frac{1}{p_m^2}\max_{1\leq k\leq n}\|X_{k:}\|^2,$
or estimated from the incomplete data matrix,
$
\hat{L}_n^\textrm{NA}=\frac{1}{\hat{p}_m^2}\max_{1\leq k\leq n}\frac{\|\tilde{X}_{k:}\|^2d}{\sum_j D_{kj}}, 
$
with $\hat{p}_m=\min_{1\leq j \leq d} \hat{p}_j$, and  $\hat{p}_j=\frac{\sum_{k}D_{kj}}{n}$.
In $\hat{L}_n^\textrm{NA}$, the squared norm of each row $\|\tilde{X}_{k:}\|^2$ is divided by the  proportion of observed values $\frac{d}{\sum_j D_{kj}}$. This way, the value of $\|\tilde{X}_{k:}\|^2$ is renormalized, by taking into account that some rows may contain more missing values than others. Note that theoretically the step size has to satisfy $\alpha\leq \frac{1}{2\hat{L}_n^\textrm{NA}}$, thus $\hat{L}_n^\textrm{NA}$ may be overestimated but should not be underestimated at the risk of instability in Algorithm \ref{alg:SGDav}. 
\Cref{fig:synthdatap07} shows that using  a slightly overestimated Lipschitz constant estimate does not deteriorate the convergence obtained using the oracle estimate. 

\begin{figure}[H]
	\begin{center}
		\includegraphics[width=0.95\textwidth]{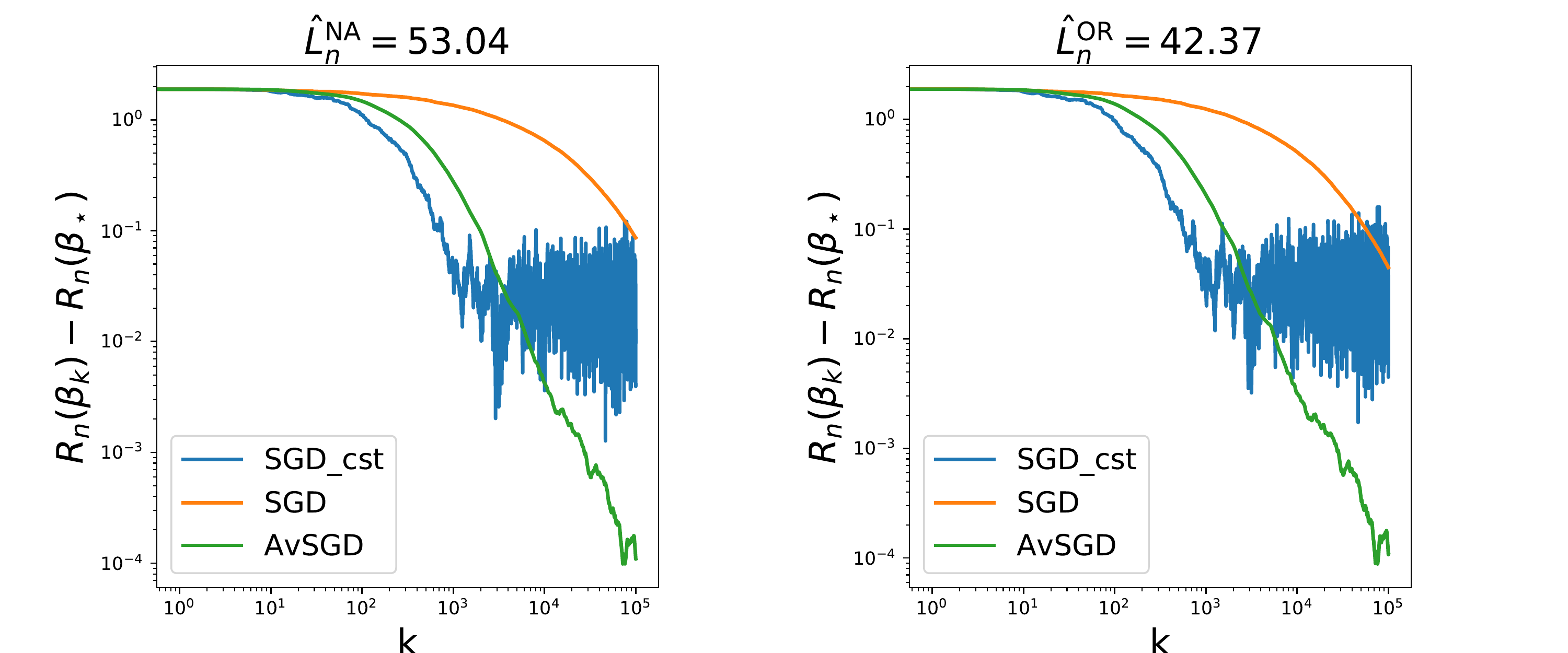}
		\caption{ \label{fig:synthdatap07}  Empirical excess risk $\left(R_\dimob(\coeff_k)-R_\dimob(\coeff^{\star})\right)$ given $\dimob$ for synthetic data ($n=10^5$, $d=10$) when there is 30\% MCAR data, with 1 pass over the data and estimating the Lipschitz  constant.}
	\end{center}
\end{figure}

\section{Add-on to \Cref{sec:exp}: Handling polynomial missing features}
\label{appsec:poly_features}

The debiased averaged SGD algorithm proposed in \Cref{sec:algo} can be further extended to the case of polynomial features by using a different debiasing than in \Cref{eq:gradMCARgeneral}. 

For example, in dimension $d=2$, with second-order polynomial features, the interaction effect of $X_{k1}X_{k2}$ and the effects of $X_{k1}^2$, $X_{k2}^2$ are accounted, so the augmented matrix design can be written as 
	$$( X_{:1} | X_{:2} | X_{:1}X_{:2} | X_{:1}^2 | X_{:2}^2 )^T.$$
	 Then, the ``descent'' direction at iteration $k$ in  \Cref{eq:gradMCARgeneral} should be chosen as
	 $$U^{\odot -1}\odot \tilde{X}_{k:}\tilde{X}_{k:}^T\beta_k-\mathrm{diag}(U)^{\odot -1}\odot\tilde{X}_{k:}y_{k}$$
	 where
	\vspace{-1em}\[U=\begin{pmatrix}
	p_1 & p_1p_2 & p_1p_2 & p_1 & p_1p_2 \\
	p_1p_2  & p_2 & p_1p_2  & p_1p_2 & p_2 \\
	p_1p_2  & p_1p_2 & p_1p_2 & p_1p_2 & p_1p_2 \\
	p_1 & p_1p_2  &  p_1p_2  & p_1 & p_1p_2 \\
	p_1p_2  & p_2 & p_1p_2  & p_1p_2 & p_2
	\end{pmatrix},\]
and $\mathrm{diag}(U)$ denotes the vector formed by the diagonal coefficients of $U$ and $U^{\odot -1}$ stands for the matrix formed of the inverse coefficients of $U$.

\paragraph{Synthetic data}  Considering a second-order model, we simulate data according to $y=(X_{:1}X_{:2} | X_{:1}^2 | X_{:2}^2 )^T\beta^\star + \epsilon$. An additional experiment is given in~\Cref{fig:mixedeffects} in \Cref{appsec:Additional-figures}, illustrating that Algorithm~\ref{alg:SGDav} still achieves a rate of $\mathcal{O}\left(\frac{1}{n}\right)$ while dealing with polynomial features of degree 2.

\begin{figure}[H]
	\begin{center}
		\includegraphics[width=0.7\textwidth]{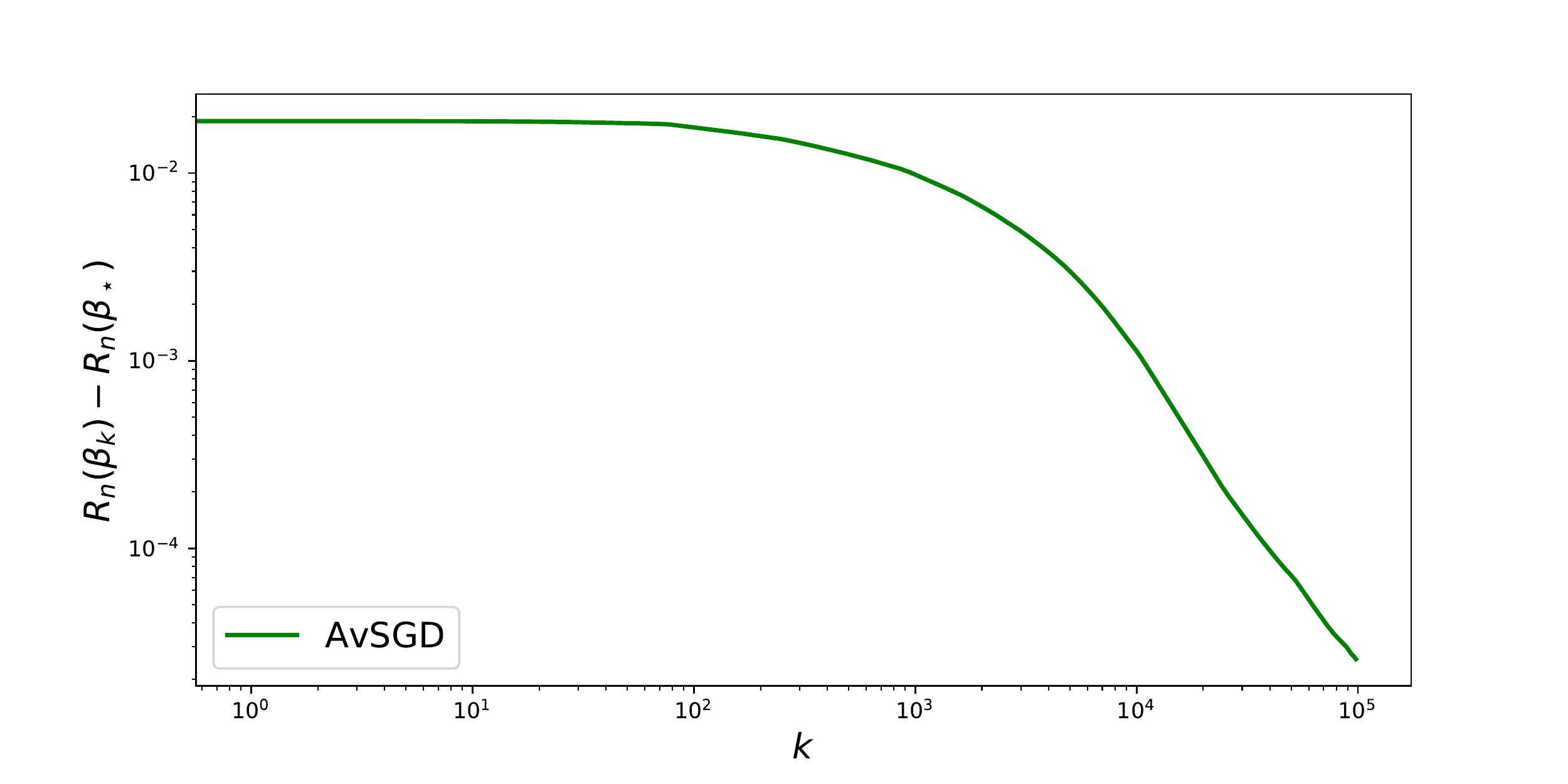}
		\caption{ \label{fig:mixedeffects}  Empirical excess risk $\left(R_\dimob(\coeff_k)-R_\dimob(\coeff^{\star})\right)$ given $\dimob$ for synthetic data ($n=10^5$, $d=10$) when the model accounts mixed effects.}
	\end{center}
\end{figure}

\paragraph{Real dataset} About large-scale setting there is no computational barrier to apply the proposed method in high dimension, as the computational cost is similar to standard SGD strategies without missing data. These are computationally cheap at each iteration and particularly relevant on large datasets. 
In this section, we propose to run the proposed algorithm on the superconductivity dataset as in \Cref{subsubsec:superconductivity}. $30\%$ of missing values are uniformly introduced in the initial 81 features, with $n= 21263$. However, here we consider polynomial features of order 2, which increases the initial dimension 81 to 3400.

The empirical proportions of missing values for each variable in the resulting dataset are represented on \Cref{fig:superconductivity_poly_proportion}, and the observed convergence rate for one pass on the data is displayed in \Cref{fig:superconductivity_poly}. With the same numerical complexity,
\Cref{alg:SGDav} performs as well as an averaged SGD strategy run on the complete observations, whereas a standard SGD strategy run on imputed-by-0 data saturates far from the optimum.

\begin{figure}[H]
	\begin{center}
		\includegraphics[width=0.7\textwidth]{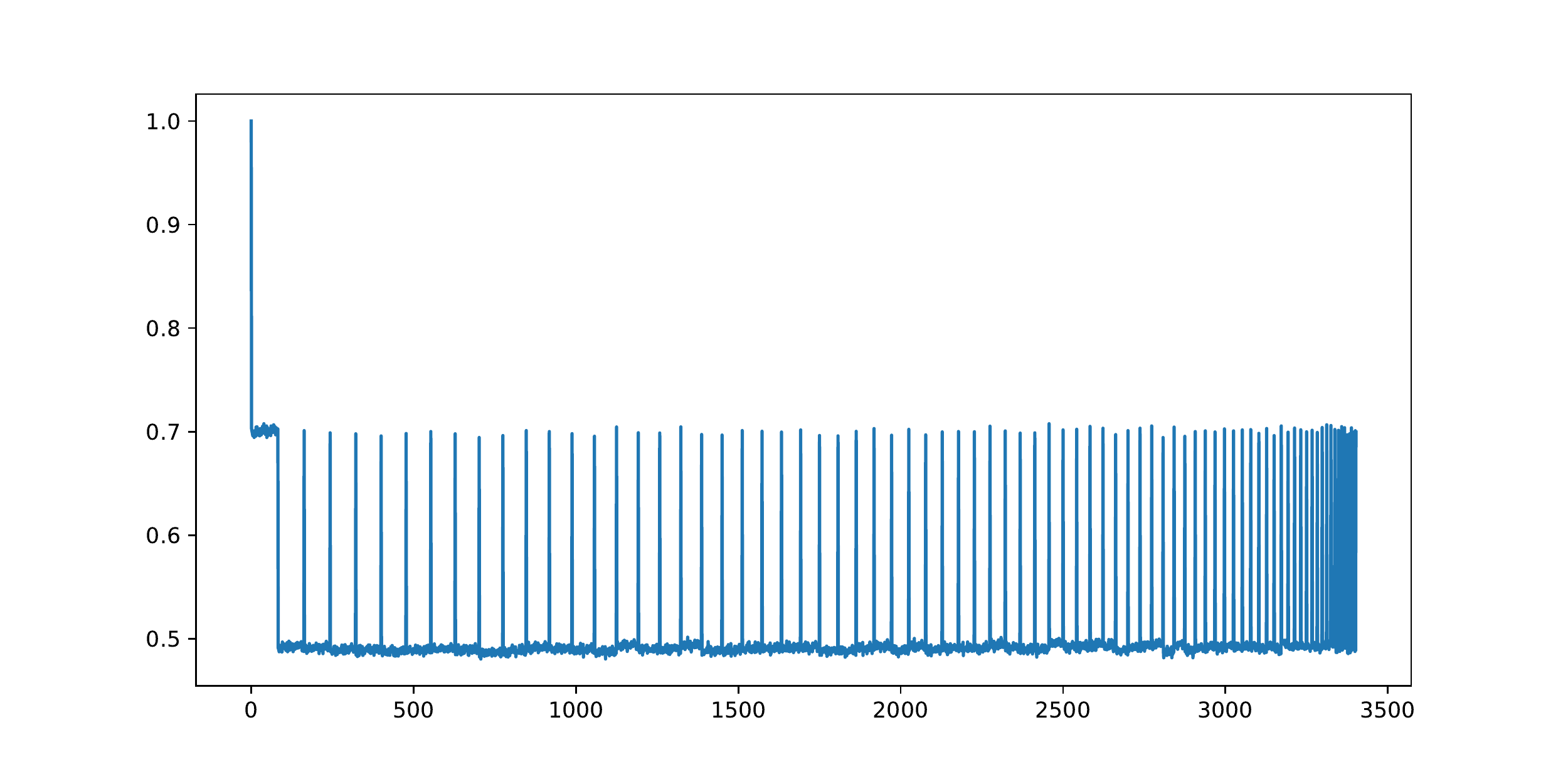}
		\caption{ \label{fig:superconductivity_poly_proportion} Proportion of missing values for the polynomial features of degree 2 on the superconductivity dataset, when the initial missingness proportion on the raw features is $30\%$.}
	
	\end{center}
\end{figure}

\begin{figure}[H]
	\begin{center}
		\includegraphics[width=0.7\textwidth]{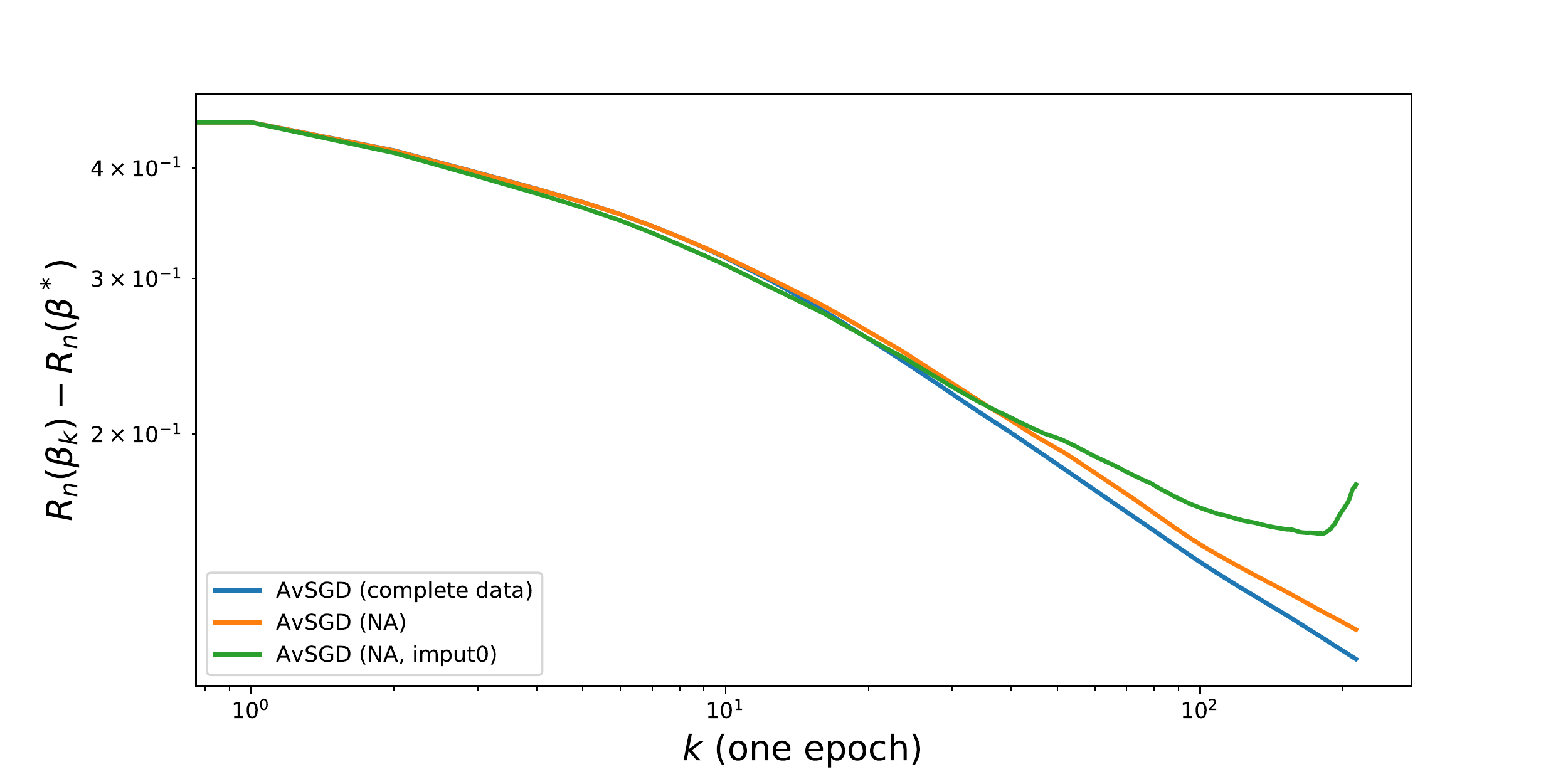}
		\caption{ \label{fig:superconductivity_poly}  Empirical excess risk $\left(R_\dimob(\coeff_k)-R_\dimob(\coeff^{\star})\right)$ given $\dimob$ for the superconductivity dataset ($n=21263$) (containing 81 initial features) and $d=3403$ with polynomial features of degree 2. Three different algorithms are compared: an averaged SGD on complete data (blue), the proposed debiased averaged SGD \Cref{alg:SGDav} (orange) and an averaged SGD run on imputed-by-0 data without any debiasing (green).}
	\end{center}
\end{figure}

\section{Add-on to \Cref{sec:exp}: Description of the TraumaBase data variables}
\label{appsec:Traumabase}
The variables of the TraumaBase dataset which are used in experiments are the following:
\begin{itemize}[label=\textbullet]
	\item \textit{Lactate}: The conjugate base of lactic acid. 
	\item \textit{Delta.Hemo}: The difference between the homoglobin on arrival at hospital and that in the ambulance. 
	\item \textit{VE}: A volume expander is a type of intravenous  therapy that has the function of providing volume for the circulatory system. 
	\item \textit{RBC}: A binary index which indicates whether the transfusion of Red Blood Cells Concentrates is performed. 
	\item \textit{SI}: Shock index indicates level of occult shock based on heart rate (\textit{HR}) and systolic blood pressure (\textit{SBP}). $SI=\frac{HR}{SBP}$. Evaluated on arrival at hospital.
	\item \textit{HR}: Heart rate measured on arrival of hospital. 
	\item \textit{Age}: Age.
	
\end{itemize}

\end{document}